\newcommand{\N}{\mathbb{N}}
\newtheorem{prop}{Proposition}
\newtheorem{cor}{Corollary}
\newtheorem{theorem}{Theorem}
\newtheorem{obs}{Observation}
\newtheorem{defn}{Definition}
\newtheorem{question}{Question}
\begin{document}
	
	\begin{center}
		\vskip 1cm{\Large\bf 
			Constructions of and Bounds on the Toric Mosaic Number
		}\footnote{Subject Classification: 57K10}
		\vskip 1cm{\Large
			Kendall Heiney, Margaret Kipe, Samantha Pezzimenti, Kaelyn Pontes, L\d\uhorn c Ta
		}
	\end{center}
	\vskip .2 in
	
	\begin{abstract}
		Knot mosaics were introduced by Kauffman and Lomonaco in the context of quantum knots, but have since been studied for their own right. A classical knot mosaic is formed on a square grid. In this work, we identify opposite edges of the square to form mosaics on the surface of a torus. We provide two algorithms for efficiently constructing toric mosaics of torus knots, providing upper bounds for the toric mosaic number. Using these results and a computer search, we provide a census of known toric mosaic numbers.
	\end{abstract}
	
	\section{Introduction}
	
	\subsection{Classical knot mosaics}
	
	In 2008, Lomonaco and Kauffman \cite{lomonaco} introduced a method of building knots using the 11 mosaic tiles in Figure \ref{fig:OGtiles} below as a means to describe quantum knots.
	
	\begin{figure}[h]
		\centering
		\includegraphics[scale=0.2]{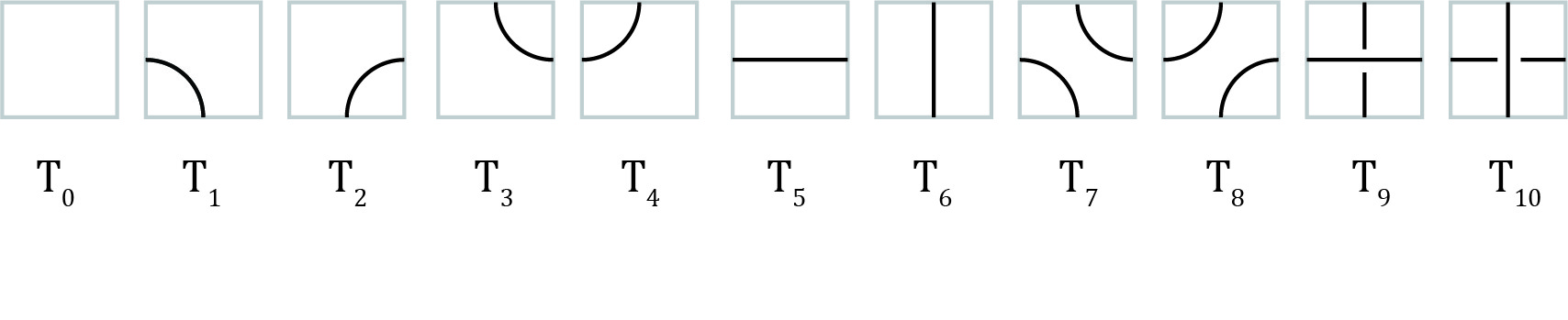}
		\vspace{-30pt}
		\caption{The mosaic tiles $T_0,\ldots,T_{10}$.}
		\label{fig:OGtiles}
	\end{figure}
	
	In particular, an \textbf{$n$-mosaic} is an arrangement of the tiles $T_0,\ldots,T_{10}$ into an $n\times n$ array. Such a mosaic is \textbf{suitably connected} if connection points of each tile coincide with the connection points of adjacent tiles. Notice that such a requirement puts restrictions on allowable tiles in certain positions. For example, tiles $T_7,\ldots, T_{10}$ cannot be placed along the outer boundary. 
	
	Suitably connected $n$-mosaics can be used to create projections of knots and links. For example, the $4$-mosaic in Figure \ref{fig:OGtrefoil} represents a trefoil knot. While knot mosaics can be made arbitrarily large (just add another row and column of $T_0$ tiles), they cannot be made arbitrarily small. The \textbf{mosaic number $m(K)$ of a knot $K$} is the minimum value $n$ such that $K$ can be represented on an $n$-mosaic. The mosaic in Figure \ref{fig:OGtrefoil} shows that $m(3_1)\leq 4$, where $3_1$ denotes the trefoil knot. In fact, since no crossing tiles can appear on the outer boundary, any $3$-mosaic of a knot can only represent the unknot. Thus, $m(3_1)=4.$ The mosaic number has been computed for all knots having up to 10 crossings; see \cite{normal-mosaics}.
	
	\begin{figure}[h]
		\centering
		\includegraphics[scale=0.2]{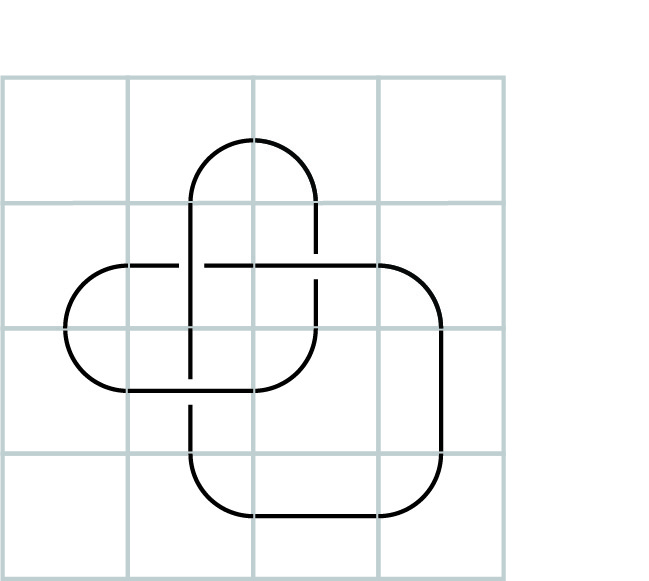}
		\caption{A $4$-mosaic representing a trefoil knot.}
		\label{fig:OGtrefoil}
	\end{figure}
	
	\subsection{Toric knot mosaics}
	
	Motivated by an open question in \cite{cubic}, 
	we now consider mosaics in which opposite edges are identified to form a torus as in Figure \ref{fig:torictrefoil}. Toric link mosaics were introduced in \cite{first-toric-paper} and \cite{second-toric-paper}, where they are referred to as ``toroidal mosaics.'' An $n$-mosaic as defined in the previous section will henceforth be referred to as a \textit{classical $n$-mosaic}. 
	
	\begin{defn}
		A \textbf{toric $n$-mosaic} is a classical $n$-mosaic with opposite edges identified. It is \textbf{suitably connected} if the connection points of all tiles coincide with the connection points of contiguous tiles, and of the tiles on identified edges.
	\end{defn}
	
	Although any embedding can be chosen, we will stick with the convention that first identifies the top and bottom edges, and then identifies the left and right edges. (Note that this is referred to as a ``{longitudinal toroidal mosaic}'' in \cite{first-toric-paper}, and the opposite convention is referred to as a ``{meridianal toroidal mosaic}.'') 
	
	One immediate consequence of this choice of embedding is that crossings are introduced outside of the mosaic grid. We refer to these induced crossings as \textbf{hidden crossings}, and crossings on the mosaic grid as \textbf{visible crossings}. For instance, the toric $2$-mosaic in Figure \ref{fig:torictrefoil} contains only one visible crossing, but has 4 hidden crossings. This particular $2$-mosaic represents a trefoil knot. In general,
	
	\begin{obs}
		A toric $n$-mosaic with $A$ connection points on the left and right edges, and $B$ connection points on the top and bottom edges, will contain $AB$ hidden crossings. 
	\end{obs}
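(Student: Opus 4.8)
The plan is to reduce the count of hidden crossings to a purely combinatorial computation about how arcs drawn \emph{outside} the square grid must cross one another. First I would record what suitable connectivity forces on the boundary: the $A$ connection points on the right edge must sit at the same heights as the $A$ on the left edge, say at heights $q_1 < \cdots < q_A$, and the $B$ points on the top must sit directly above the $B$ on the bottom, say at horizontal positions $p_1 < \cdots < p_B$. The hidden crossings are precisely the crossings among the arcs one draws in the exterior of the grid to realize the identifications: a \emph{vertical} arc joining the top point at $p_i$ to the bottom point at $p_i$ for each $i$, and a \emph{horizontal} arc joining the left point at $q_j$ to the right point at $q_j$ for each $j$. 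There are $B$ arcs of the first kind and $A$ of the second.

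Next I would invoke the standard fact that two arcs drawn in a disk (here the exterior of the grid, which is a disk on the sphere) with endpoints on its boundary circle can be drawn to cross exactly once when their four endpoints interleave in cyclic order, and need not cross at all otherwise; moreover a whole family of such arcs can be realized simultaneously so that each pair crosses exactly its interleaving number of times. Thus the total number of hidden crossings equals the number of interleaving pairs of arcs, and the longitudinal embedding (identify top--bottom, then left--right) is exactly such a minimal realization.

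Then I would read off the cyclic order of all $2A+2B$ endpoints around the boundary of the square: traversing counterclockwise gives the bottom points $p_1,\dots,p_B$, then the right points $q_1,\dots,q_A$, then the top points $p_B,\dots,p_1$, then the left points $q_A,\dots,q_1$. With this order in hand I would check three cases. Two vertical arcs, for $p_i < p_{i'}$, have their four endpoints appearing in a nested order (the $i'$ arc sitting inside the $i$ arc), so they never interleave; by the same argument rotated a quarter turn, two horizontal arcs never interleave; but a vertical arc and a horizontal arc have their endpoints occurring in the pattern bottom, right, top, left, i.e.\ in positions $1,3$ versus $2,4$, which always interleave. Hence each of the $B$ vertical arcs crosses each of the $A$ horizontal arcs exactly once, with no other crossings, giving $0 + 0 + AB = AB$ hidden crossings.

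The main obstacle I anticipate is not the counting but making the geometric set-up airtight: pinning down the cyclic order precisely and justifying that the chosen embedding realizes each interleaving pair with a single crossing while introducing no extraneous ones, so that the combinatorial interleaving number is actually attained. Once the reduction to the exterior arc diagram is justified, the three-case interleaving check is routine, and the trefoil example ($A=B=2$, giving $4$ hidden crossings) serves as a useful sanity check.
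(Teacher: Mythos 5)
Your proposal is correct, but it is worth noting that the paper itself gives no proof of this Observation at all: it is stated as immediate from the longitudinal embedding convention, supported only by the toric $2$-mosaic trefoil example ($A=B=2$, four hidden crossings). Your route supplies the missing argument, and it is a genuinely more structured one: you reduce hidden crossings to crossings among chords in the exterior disk (the complement of the grid on the sphere), read off the cyclic order of the $2A+2B$ boundary endpoints, and count interleaving pairs, correctly finding that same-family chords are nested while every vertical--horizontal pair interleaves. This buys something beyond the statement itself: the interleaving lower bound shows that $AB$ is the \emph{minimum} number of exterior crossings over all ways of drawing the identification arcs, i.e., the longitudinal convention wastes no crossings. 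The one caveat is the step you assert rather than verify, namely that the longitudinal embedding "is exactly such a minimal realization." That assertion is logically the whole content of the Observation, so the minimality machinery is doing expository rather than deductive work unless you close it with the direct check: in the standard drawing, the $B$ top--bottom arcs form a band of parallel, pairwise disjoint nested arcs, the $A$ left--right arcs likewise, and each left--right arc crosses the band of $B$ parallel arcs transversally exactly once per arc, giving $AB$ crossings and no others. That check is routine (it is exactly what the paper's figures depict), and once stated your argument is complete; alternatively, the check alone proves the Observation and the interleaving argument can be kept as the stronger minimality remark.
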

	
	\begin{figure}[h]
		\centering
		\includegraphics[scale=1.0]{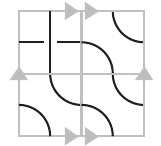}
		\caption{A toric $2$-mosaic representing a trefoil knot.}
		\label{fig:torictrefoil}
	\end{figure}
	
	By design, toric mosaics have the potential to use fewer tiles than classical mosaics to construct knots. We define the following knot invariant:
	
	\begin{defn}
		The \textbf{toric mosaic number $m_T(K)$ of a knot $K$} is the smallest value $n$ such that $K$ can be represented on a toric $n$-mosaic. 
	\end{defn}
	
	%Like the classical mosaic number, the toric mosaic number is a knot invariant.
	Since any classical mosaic can be made into a toric mosaic by identifying edges, the following must hold:
	
	\begin{obs} \label{initial-obs}
		For all knots $K$, $m_T(K)\leq m(K).$
	\end{obs}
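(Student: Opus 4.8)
The plan is to realize $K$ on a toric grid of the same size as a minimal classical grid, by simply gluing the edges of an optimal classical mosaic. First I would fix a suitably connected classical $m(K)$-mosaic $M$ that represents $K$; by definition it occupies an $m(K)\times m(K)$ array of tiles whose arcs close up into a diagram of $K$.

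The structural fact I would lean on is that $M$ has no connection points on its outer boundary. Indeed, $K$ is a knot, hence a single closed curve with no endpoints, so no arc of the diagram can terminate; a connection point on the outer edge of $M$ would be a free arc-end with no neighboring tile to match, violating suitable connectedness. This is precisely why the boundary tiles of a classical mosaic are restricted to $T_0,\ldots,T_6$, and it is the one place where I would need to argue rather than compute.

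Next I would form the toric $m(K)$-mosaic $\widehat M$ by identifying opposite edges of $M$ under the convention fixed above (top with bottom, then left with right). Since every edge of $M$ is free of connection points, the identification glues tiles along empty boundaries: the suitable-connectedness condition on the identified edges holds vacuously, so $\widehat M$ is a suitably connected toric mosaic. By the preceding observation on hidden crossings, the count of hidden crossings equals $A\cdot B=0\cdot 0=0$, so no new crossings are created. Consequently the diagram carried by $\widehat M$ on the torus coincides, arc for arc and crossing for crossing, with the planar diagram carried by $M$, and therefore still represents $K$. This exhibits $K$ on a toric $m(K)$-mosaic and yields $m_T(K)\le m(K)$.

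Since every step is forced, I do not anticipate a serious obstacle; the only subtlety is the absence of boundary connection points, which is an immediate consequence of $K$ being a closed curve, and everything else is a direct unwinding of the definitions of the two mosaic numbers.
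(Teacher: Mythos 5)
Your proposal is correct and follows exactly the paper's (implicit) argument: the paper justifies this observation with the single remark that any classical mosaic becomes a toric mosaic by identifying edges, and your write-up simply fills in the supporting details — that suitable connectedness forces the boundary of a classical mosaic to be free of connection points, so the gluing is vacuous, creates no hidden crossings, and preserves the knot type. This is the same approach, just made explicit.
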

	
	For example, if $U$ represents the unknot, $m(U)=2$, while $m_T(U)=1$. 
	Indeed, there are 11 toric $1$-mosaics consisting of only one tile. The toric $1$-mosaics formed from tiles $T_1,\ldots,T_8$ all form unknots. Depending on the embedding chosen, the toric $1$-mosaics formed from tiles $T_9$ and $T_{10}$ could either represent unlinked unknots or Hopf links. Since we are using the longitudinal convention, the toric $1$-mosaic formed by $T_9$ represents a Hopf link, and the toric $1$-mosaic formed by $T_{10}$ represents two unlinked unknots. This shows the following:
	
	\begin{prop}
		$m_T(K)=1$ if and only if $K$ is the unknot. Moreover, $m_T(K)=2$ if and only if $K$ is the trefoil.
	\end{prop}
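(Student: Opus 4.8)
The plan is to treat the two biconditionals separately, leaning on the complete census of small mosaics. For the first statement I would simply invoke the enumeration of the eleven toric $1$-mosaics given above: tiles $T_1,\dots,T_8$ each yield an unknot, $T_0$ yields the empty diagram, and $T_9,T_{10}$ yield a Hopf link and a two-component unlink. Thus the only single-component link (i.e.\ knot) arising on a toric $1$-mosaic is the unknot, and since $m_T(U)=1$ was already noted, we conclude that $m_T(K)=1$ exactly when $K=U$.

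For the reverse implication of the second statement, Figure~\ref{fig:torictrefoil} already displays the trefoil on a toric $2$-mosaic, so $m_T(3_1)\le 2$; as the trefoil is not the unknot, the first statement gives $m_T(3_1)\ge 2$, whence $m_T(3_1)=2$.

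The real content is the forward implication: any knot $K$ with $m_T(K)=2$ is the trefoil. I would establish this by a structured enumeration of all suitably connected toric $2$-mosaics. Encode each tile by the subset of its sides $\{L,R,T,B\}$ bearing a connection point; since every tile joins its connection points in pairs, each tile carries an even number of them. Writing the horizontal matching in row $i$ as $R(t_{i0})=L(t_{i1})$ and $R(t_{i1})=L(t_{i0})$, and the vertical matching in each column analogously, the entire connection-point pattern is governed by eight bits, and the parity condition becomes a small system of $\F_2$-linear equations. Solving it leaves only a short list of admissible patterns, which I would further collapse using the symmetries of the toric mosaic (cyclic shifts of the two rows and of the two columns, the transpose exchanging the identifications, and reflections) together with the freedom to reorient individual tiles. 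For each surviving pattern I would assign to every $4$-valent position one of its legal tiles — a crossing with either over/under choice, or one of the two double arcs — trace the curve to retain only the single-component diagrams, and name the resulting knot.

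The main obstacle is this last step: confirming that every single-component diagram is either the unknot or the trefoil, and that the enumeration misses nothing. The total complexity is controlled, since a toric $2$-mosaic has at most four visible crossings and, by the observation on hidden crossings, at most $AB\le 4$ hidden crossings, so at most eight crossings overall; under Reidemeister moves each candidate should reduce to the trivial diagram or to a recognizable trefoil. The delicate parts are guaranteeing that the symmetry reduction discards no genuinely distinct diagram and explicitly excluding the small knots of crossing number four and five that are a priori compatible with the crossing bound. The computer search mentioned in the abstract serves as an independent verification of this finite classification.
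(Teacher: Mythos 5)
Your handling of the first biconditional and of the easy direction of the second coincides with the paper's: enumerate the eleven single-tile toric mosaics (only the unknot appears as a knot, since $T_9$ and $T_{10}$ give a Hopf link and a two-component unlink under the longitudinal convention), and combine the toric $2$-mosaic of the trefoil in Figure \ref{fig:torictrefoil} with the first statement to get $m_T(3_1)=2$.

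The gap is in the forward implication of the second statement, which you rightly call the real content. The paper does not re-derive the classification of toric $2$-mosaics; it invokes the already-published catalog of all toric link $2$-mosaics (\cite{first-toric-paper}, extended in \cite{second-toric-paper}), in which the only knots appearing are the unknot and the trefoil, and the claim follows immediately. You instead propose to rebuild that catalog from scratch, but your enumeration is only a plan: you never produce the list of admissible connection-point patterns, never verify that the symmetry reduction loses no diagram, and never carry out the knot identification --- indeed you flag exactly these steps as ``the main obstacle.'' The crossing count you offer ($4$ visible plus at most $AB\le 4$ hidden crossings) does not close the gap, since eight crossings leave room for $4_1$, $5_1$, $5_2$, $8_{19}$, and many other knots; the assertion that ``each candidate should reduce to the trivial diagram or to a recognizable trefoil'' is precisely the statement requiring proof. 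Appealing to the paper's computer search as ``independent verification'' is not available to you in a self-contained argument (and the paper's stated computational theorems concern $3$- and $4$-mosaics). So as written, the proposal defers rather than completes the essential step; it becomes a proof only once the finite enumeration is actually executed, or once the prior catalog is cited as the paper does.
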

	
	In \cite{first-toric-paper}, and extended in \cite{second-toric-paper}, the authors produce a catalog of all toric link $2$-mosaics. The only knots that appear on the list are the unknot and trefoil. Thus, the following must hold:
	
	\begin{obs}
		$m_T(K)\geq 3$ for all knots $K$ not equal to the unknot or trefoil.
	\end{obs}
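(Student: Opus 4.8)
The plan is to obtain the bound as an immediate contrapositive of the preceding Proposition, using only the fact that $m_T(K)$ is a positive integer. First I would record that, by definition, $m_T(K)$ is the smallest $n$ admitting a toric $n$-mosaic of $K$, so $m_T(K)\geq 1$ for every knot $K$; the case $n=0$ does not arise.

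Next, suppose $K$ is a knot distinct from both the unknot and the trefoil. The Proposition supplies the two biconditionals $m_T(K)=1 \iff K=U$ and $m_T(K)=2 \iff K=3_1$. I would negate the forward implication of each: since $K\neq U$, the first biconditional forces $m_T(K)\neq 1$; since $K\neq 3_1$, the second forces $m_T(K)\neq 2$. Combining these two exclusions with the baseline $m_T(K)\geq 1$, the only remaining possibility is $m_T(K)\geq 3$, which is exactly the claim. The argument is thus a two-line logical deduction once the Proposition is in hand.

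I would emphasize that all of the genuine mathematical content has already been packaged into the Proposition, whose reverse implications rest on the complete classification of small toric mosaics: a direct inspection of the eleven single-tile configurations disposes of the toric $1$-mosaic case, and the full enumeration of toric $2$-mosaics carried out in the cited works shows that the unknot and trefoil are the only knots realizable on a toric $2$-mosaic. Consequently there is no real obstacle to the observation itself—it is purely formal. Were one instead to seek a proof bypassing the Proposition, the sole difficulty would be reproducing that finite classification of toric $2$-mosaics; but since the Proposition is available to us, the stated lower bound follows immediately by contraposition.
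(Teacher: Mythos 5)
Your proposal is correct and takes essentially the same route as the paper: the paper deduces the bound directly from the classification of toric $1$-mosaics and the cited catalog of all toric $2$-mosaics (on which only the unknot and trefoil appear), which is precisely the content packaged into the Proposition you invoke. Your contrapositive phrasing is a formal repackaging of that same argument, and you correctly identify that the only substantive input is the finite classification of small toric mosaics.
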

	
	Note that the unknot and trefoil both make the inequality in Observation \ref{initial-obs} strict.

	%The latter article also calculates the number of distinct $p\times p$ toric link mosaics that exist for a given prime number $p$. However, none of the existing literature has specifically investigated toric \emph{knot} mosaics or computed the toric mosaic numbers of smooth knots.
	
	%From toriodal mosaic paper:
	%
	%``In order to remove this ambiguity one of two possible conventions must be adopted: Convention 1. Left–to–right is designated as the meridianal direction. (Call these meridianal toroidal mosaic knots.) Convention 2. Top–to–bottom is designated as the longitudinal direction. (Call these longitudinal toroidal mosaic knots.) Henceforth, we shall implicitly adopt the latter convention.''
	
	\subsection{Torus knots}\label{sec:torus-knots}
	
	A \textbf{torus knot} is a knot that can be embedded on the surface of an unknotted torus in $\mathbb{R}^3$. It can be identified by a pair of coprime integers $(p,q)$ in the following way: the torus knot wraps around the meridian (the short way) $p$ times and around the longitude (the long way) $q$ times.
	
	It is well-known that the crossing number of a $(p,q)$-torus knot is $\min(q(p-1),p(q-1))$ (see, for example \cite{KnotBook}).
	Since $(p,q)$-torus knots and $(q,p)$-torus knots are ambient-isotopic, we will assume without loss of generality that $p<q$.
	Since $(1,q)$-torus knots are unknots and $(-p,-q)$-torus knots are ambient-isotopic to $(p,q)$-torus knots, we will also assume without loss of generality that $p \geq 2$.
	Therefore, the crossing number is simply $q(p-1)$. For more information on torus knots, we direct the reader to Section 5.1 of \cite{KnotBook}.
	%For simplicity, we will henceforth refer to the $(p,q)$-torus knot simply as $(p,q)$ outside of headings, captions, and statements of theorems.
	
	One \textbf{classical representation} of any $(p,q)$-torus knot on the surface of an unwrapped torus is shown in the top image of Figure \ref{fig:classic-rep}. In this representation, $p$ strands intersect each vertical edge, and $q$ strands intersect each horizontal edge. Moreover, all $q(p-1)$ irreducible crossings occur outside of the diagram. The torus knot represented in Figure \ref{fig:classic-rep} is the $(3,4)$-torus knot, and its $4(3-1)=8$ irreducible hidden crossings are evident in the right image of the figure.
	
	\begin{figure}[h]
		\centering
		\includegraphics[width=0.8\linewidth]{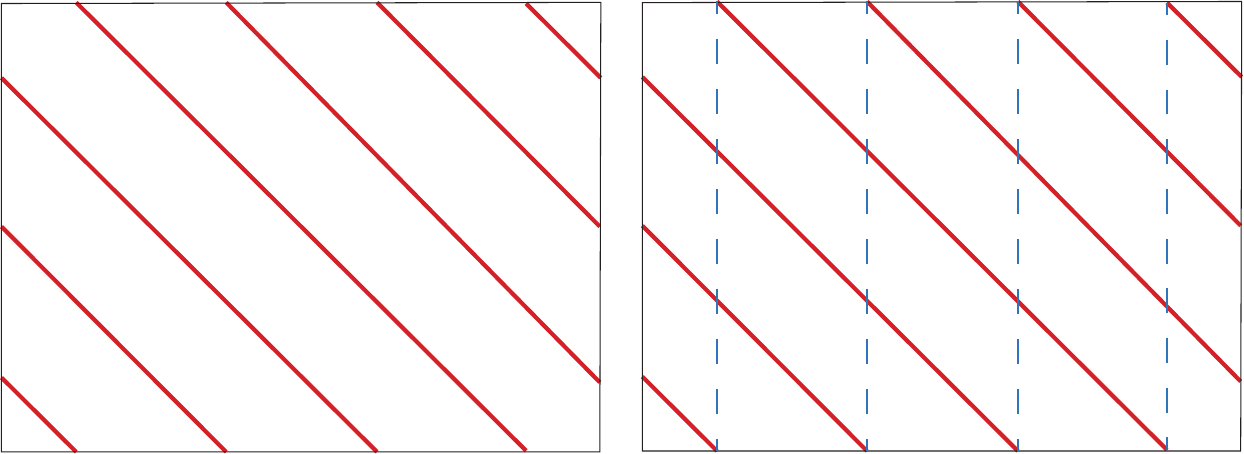}
		\caption{The classical representation of a $(3,4)$-torus knot and its eight irreducible hidden crossings.}
		\label{fig:classic-rep}
	\end{figure}
	
	This representation gives an immediate, albeit naive upper bound for the toric mosaic number of torus knots. 
	Given a $(p,q)$-torus knot, a $p\times q$ grid whose tiles are all $T_7$ gives a non-square mosaic representation of the knot's classical representation. As in Figure \ref{fig:classic-mosaic}, appending a $(q-p)\times q$ grid whose tiles are all $T_6$ to the bottom of this representation gives a toric $q$-mosaic for the knot, giving us the following:
	
	\begin{prop} \label{naivebound}
		If $K$ is a $(p,q)$-torus knot with $2\leq p< q$, then $m_T(K) \leq q$.
	\end{prop}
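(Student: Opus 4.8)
The plan is to verify directly that the explicit $q\times q$ array described above is a suitably connected toric $q$-mosaic representing the $(p,q)$-torus knot; exhibiting one such mosaic immediately yields $m_T(K)\le q$. The array is built from two blocks: a $p\times q$ block of the double-arc tile $T_7$ on top, and a $(q-p)\times q$ block of the vertical straight tile $T_6$ below it. Note that the second block is legitimate and nonempty precisely because $p<q$ gives $q-p\ge 1$, and that on a torus there is no outer boundary, so placing the all-four-edge tile $T_7$ everywhere is allowed.

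First I would analyze the strand routing of the $T_7$ block. Since $T_7$ is a double-arc tile with a connection point on each of its four edges, chaining copies across a row and down a column links their arcs into a single family of parallel diagonal staircases, and I would check that this family is exactly the mosaic rendering of the diagonal strands in the classical representation of Figure~\ref{fig:classic-rep}. Counting connection points then pins down the edge data: the leftmost and rightmost columns of $T_7$ tiles each contribute $p$ connection points to the left and right edges, and the top row contributes $q$ to the top edge. Because $T_6$ carries a strand straight from top to bottom and has no connection points on its sides, the appended block simply conducts the $q$ strands leaving the bottom of the $T_7$ block straight down to the bottom edge, introducing no visible crossings and contributing $q$ connection points to the bottom edge and none to the sides.

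Next I would verify suitable connectivity, both internally and across the toric identifications. Interior adjacencies agree at shared edge midpoints by inspection, including the $T_7$–$T_6$ interface (the $q$ bottom points of row $p$ meeting the $q$ top points of row $p+1$) and the vacuous $T_6$–$T_6$ vertical interfaces. For the gluings, the top and bottom edges each carry $q$ connection points aligned column-by-column, while the left and right edges each carry $p$ connection points aligned row-by-row, so identifying opposite edges respects every connection point. I would then argue the knot type: the $T_6$ block consists only of straight vertical arcs, so collapsing that region is an ambient isotopy, leaving the diagonal weaving of the $T_7$ block, i.e.\ the classical representation, as the only essential structure; hence the mosaic represents the $(p,q)$-torus knot, which is a single component since $\gcd(p,q)=1$.

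The hard part will be this last step: rigorously matching the strand pattern of the all-$T_7$ block, together with the two identifications, to the $(p,q)$ curve on the torus rather than to some other torus knot or a link. I expect to handle it by tracking a single strand through the tiling and both gluings and checking that it wraps $p$ times meridionally and $q$ times longitudinally before closing, using coprimality to conclude the trace visits every strand and closes into one knot. As a consistency check, the hidden-crossing count $A\cdot B = p\cdot q$ from the Observation, compared against the known crossing number $q(p-1)$, says that exactly $q$ of the hidden crossings are reducible, in agreement with the classical representation.
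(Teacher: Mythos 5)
Your proposal is correct and follows essentially the same route as the paper: the paper's argument is exactly this construction (a $p\times q$ block of $T_7$ tiles realizing the classical representation, with a $(q-p)\times q$ block of $T_6$ tiles appended to square it off into a toric $q$-mosaic, as in Figure~\ref{fig:classic-mosaic}). The paper leaves the connectivity check and the identification with the classical representation implicit, so your additional verification and the hidden-crossing consistency check ($pq$ hidden crossings, of which $q$ are reducible) simply make explicit what the paper takes as read.
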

	
	\begin{figure}[h]
		\centering
		\includegraphics[width=0.3\linewidth]{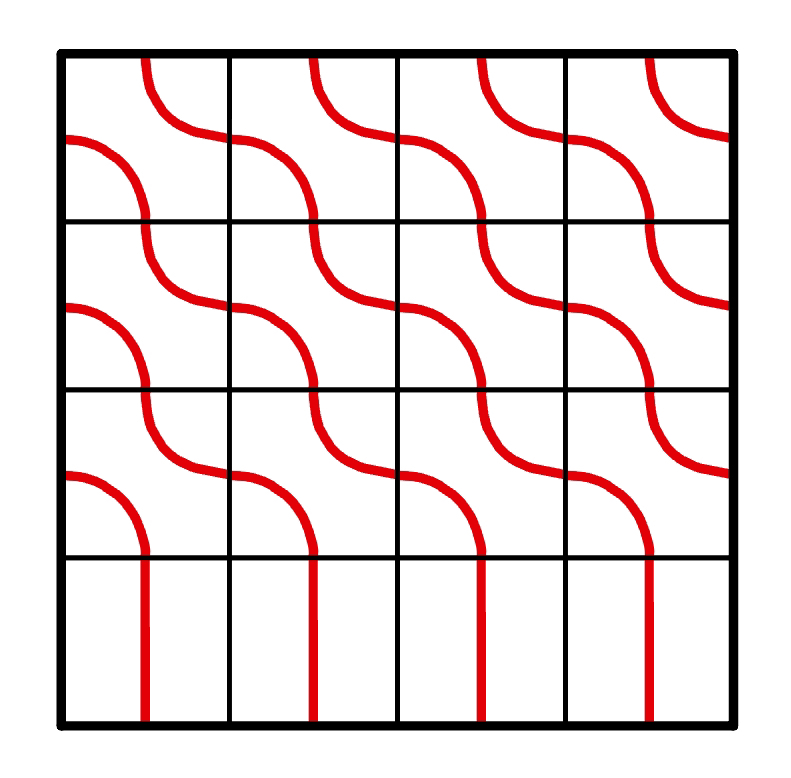}
		\caption{A toric $4$-mosaic representing the $(3,4)$-torus knot inspired by its classical representation.}
		\label{fig:classic-mosaic}
	\end{figure}
	
	In Sections \ref{sec:braid1} and \ref{sec:braid2}, we give algorithms to create more efficient toric mosaics for torus knots. Our main results in these sections are summarized in Theorem \ref{thm:rapunzel} and Corollary \ref{cor:full-braid}.
	
	In Section \ref{sec:census}, we provide a census of toric mosaic numbers for knots with 9 or fewer irreducible crossings using an exhaustive computer search. We describe an outline for the algorithm used, and a link for the full program used. Our main results in this section are summarized in Theorems \ref{thm:alg-1} and \ref{thm:alg-2}.
	
	Finally, in Section \ref{sec:questions}, we suggest some additional areas to explore and open questions related to this work.
	
	\section{Algorithmic upper bounds for $(p,q)$-torus knots} \label{sec:algos}
	
	The construction in Proposition \ref{naivebound} is quite inefficient since the bottom $q-p$ rows only use $T_6$ tiles. In this section, we address this inefficiency by introducing crossings onto the mosaic in a way that decreases the mosaic size and, hence, the upper bound on the toric mosaic number. First, we present a ``one-braid'' algorithm that gives an upper bound on the mosaic number for infinitely many classes of torus knots. Then, we present a ``full-braid'' algorithm that improves upon this bound for infinitely many $(2,q)$-torus knots.
	
	\subsection{One-braid algorithm} \label{sec:braid1}
	
	Let $\N$ be the set of nonnegative integers. 
	Using an algorithm to construct a toric $n$-mosaic for a $(p,q)$-torus knot, we will prove the following theorem.
	
	\begin{theorem} \label{thm:rapunzel}
		Let $K$ be a $(p,q)$-torus knot with $2\leq p<q$, and suppose that nonnegative integer solutions $(h,v)\in \N^2$ exist for the following system of inequalities:
		\begin{equation} \label{eq:rapunzel}
			\begin{cases}
				q-2(h+v+p)+4\geq 0,\\ % R\geq 0
				h + 3v \leq q - 3p + 4,\\ % r\leq R
				-3h-v-p+q+4\geq 0,\\ %r\geq -R
				h \geq v.
			\end{cases}
		\end{equation}
		Choose a solution that minimizes $n:=q-(h+v)$. Then $m_T(K)\leq n$.
		%  Specifically, $K$ can be realized on a toric $n$-mosaic having $h$ sets of $p-1$ horizontally arranged crossings across $h$ rows and $v$ sets of $p-1$ vertically arranged crossings across $p+v-2$ rows.
	\end{theorem}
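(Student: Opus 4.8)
The plan is to prove the bound constructively: for every feasible solution $(h,v)$ of \eqref{eq:rapunzel} I would exhibit an explicit toric $n$-mosaic, with $n=q-(h+v)$, whose underlying diagram is the $(p,q)$-torus knot; the stated bound then follows by taking the feasible solution that minimizes $n$. The natural starting point is the classical representation behind Proposition~\ref{naivebound}, which already gives a toric $q$-mosaic in which $p$ strands meet each vertical edge, $q$ strands meet each horizontal edge, and every irreducible crossing is hidden. Its inefficiency is that the bottom $q-p$ rows only transport strands straight across. I would reclaim that space by substituting crossing tiles $T_9,T_{10}$ for blocks of straight and arc tiles, trading hidden crossings for visible ones and shrinking the grid. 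The parameters $h$ and $v$ would count the two available size-reducing moves --- one acting in the ``horizontal'' and one in the ``vertical'' direction --- each of which deletes a row and a column of the square grid, so that the total reduction is $h+v$ and the side length is $n=q-(h+v)$.

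The core of the argument is then a boundary-bookkeeping check together with an identification of the knot type. For suitable connectivity I would track, tile by tile, how many strands enter each edge and confirm that contiguous tiles --- and tiles glued across the identified edges of the torus --- have matching connection points; this is exactly where the four inequalities of \eqref{eq:rapunzel} enter, as the conditions that keep every crossing block inside the available $n\times n$ region and prevent a connection point from being forced off the grid. The lopsided coefficients in the second and third inequalities reflect how far each move displaces a strand (one unit in its own direction and three in the other), while the fourth inequality $h\ge v$ fixes the orientation of the folding so that the two families of moves do not interfere. For the knot type I would read off the cyclic sequence of crossings along the strands and show that it realizes the braid $(\sigma_1\cdots\sigma_{p-1})^q$, whose closure is by definition the $(p,q)$-torus knot; since each move merely relocates crossings already present in the classical representation via a planar isotopy on the torus, the isotopy class is unchanged.

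I expect the main obstacle to be the knot-type verification rather than the connectivity count. The danger is that compressing the grid and converting hidden crossings to visible ones could introduce a Reidemeister-I kink, split off a trivial unknotted component, or otherwise alter the isotopy class. I would control this by attaching to each elementary move an explicit ambient isotopy on the torus that justifies it, and by first confirming the construction on a small case --- for instance the $(3,4)$-torus knot of Figure~\ref{fig:classic-rep} --- where the strand count and crossing pattern can be matched by hand before the general argument. Deriving the system \eqref{eq:rapunzel} cleanly from the tile-fitting requirements, rather than merely verifying feasibility after the fact, is the step I expect to demand the most care.
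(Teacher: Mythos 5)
Your high-level strategy --- construct an explicit toric $n$-mosaic for each feasible $(h,v)$, check suitable connectivity, then identify the knot type --- matches the paper's, but the proposal is missing the two ideas that actually carry the proof, and your guessed interpretations of the inequalities substitute for them incorrectly. First, the construction is never pinned down: you say crossing tiles replace blocks of straight tiles via ``horizontal'' and ``vertical'' moves, but not where they go. In the paper, all $(h+v)(p-1)$ visible crossings form a single braid: a diagonal staircase of horizontally adjacent $T_{10}$ tiles across $h$ rows, and vertical runs of $T_9$ tiles across $v$ columns (placed in columns $h-v+1$ through $h$, which is where the fourth inequality $h\geq v$ is used). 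Second --- and this is the real gap --- the inequalities are not about ``keeping crossing blocks inside the grid,'' nor about moves displacing strands ``one unit in its own direction and three in the other.'' Setting $r:=p+v-h$ and $R:=q-2(h+v+p)+4$, the first three inequalities say precisely that $R\geq 0$ and $R\geq |r|$: after the braid is placed, $R$ rows remain blank, and $|r|$ of them are filled with all-$T_8$ or all-$T_7$ ``shift'' rows (the rest with $T_6$). These shift rows force the net signed horizontal displacement of every strand, from its top connection point to its bottom one, to equal exactly $p$, so that the strand at top connection point $t_i$ connects to the strands at $t_{i\pm p\pmod n}$. That is the same period-$p$ connection pattern as the classical representation with $q$ replaced by $n$, and it is the engine of the knot-type identification; nothing in your proposal plays this role.

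Your knot-type verification is also not workable as stated. Reading off the braid word $(\sigma_1\cdots\sigma_{p-1})^{q}$ from the mosaic fails because the mosaic shows only $(h+v)(p-1)$ visible crossings --- all remaining crossings are hidden, induced off the grid by the torus embedding --- so there is no planar diagram on the grid from which to read that word. The paper avoids this by building an intermediate object, the ``H-diagram'': the classical representation with two pairs of slits cut and a middle region rolled into a cylinder, which has exactly $(h+v)(p-1)$ visible crossings and $n$ top connection points, and is manifestly the same knot $K$. The proof then exhibits a correspondence between this H-diagram and the cut-and-reglued (``altered'') mosaic preserving colors, order, crossings, and connection points, using the period-$p$ displacement computation above. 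Your fallback of ``attaching an explicit ambient isotopy to each elementary move'' is the right instinct, but those moves and isotopies are exactly the content that would need to be supplied; as written, the proposal is a plan for a proof rather than a proof.
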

	
	%For all $q$ sufficiently larger than $p$, \textcolor{red}{(Is there a condition on how much larger $q$ must be?)} such a solution always exists and 
	Optimal solutions to the system in (\ref{eq:rapunzel}) can be computed using an integer linear programming algorithm; we provide one in \texttt{toric.py} in \cite{github}. For example, an optimal solution when $p=2$ and $q\geq 3$ is $(h,v)=((q-1)/2,0)$. For $p=3$ and $7\leq q\leq 17$, some optimal solutions and their induced bounds are recorded in Table \ref{tab:one-braid-3}. 
	For $p=4$, optimal solutions and their induced bounds for all $q\geq 9$ are recorded in Table \ref{tab:one-braid-4}. This gives us the following. 
	
	\begin{cor}\label{cor:first-2-bound}
		If $K$ is a $(2,q)$-torus knot with $q\geq 3$, then $m_T(K)\leq (q+1)/2$.
		%For any $(2,q)$-torus knot $K$ with $q\geq 3$, let $n:=(q+1)/2$. Then $m_T(K)\leq n$. %Specifically, we can put $K$ on a toric $n$-mosaic with $n-1$ on-mosaic crossings.
	\end{cor}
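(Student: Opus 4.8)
The goal is to derive Corollary~\ref{cor:first-2-bound} directly from Theorem~\ref{thm:rapunzel} by exhibiting an explicit solution to the system~(\ref{eq:rapunzel}) in the special case $p=2$. The plan is to substitute the claimed optimal solution $(h,v)=((q-1)/2,0)$ into each of the four inequalities, verify that all are satisfied, and then compute the resulting bound $n=q-(h+v)$. Since the corollary restricts to $(2,q)$-torus knots with $q\geq 3$ and $p$ is forced to be odd-compatible, I would first note that for such a knot $q$ must be coprime to $2$, hence $q$ is odd, so $(q-1)/2$ is indeed a nonnegative integer and the candidate solution lies in $\N^2$.

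First I would set $p=2$ and rewrite the four inequalities of~(\ref{eq:rapunzel}) after this substitution, obtaining conditions on $h$ and $v$ in terms of $q$ alone. Then I would plug in $h=(q-1)/2$ and $v=0$ and check each one: the first inequality $q-2(h+v+p)+4\geq 0$ becomes $q-2((q-1)/2+2)+4 = q-(q-1)-4+4 = 1\geq 0$; the second, $h+3v\leq q-3p+4$, becomes $(q-1)/2 \leq q-2$, which rearranges to $q\geq 3$; the third, $-3h-v-p+q+4\geq 0$, becomes $-3(q-1)/2 - 2 + q + 4 = (-q+5)/2\geq 0$, i.e.\ $q\leq 5$; and the fourth, $h\geq v$, becomes $(q-1)/2\geq 0$, which holds.

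At this point the main obstacle surfaces: the third inequality as I have computed it only holds for $q\leq 5$, yet the corollary claims the bound for all $q\geq 3$. This signals that my arithmetic on the third constraint is the step to scrutinize most carefully, and it is the crux of the argument. The likely resolution is that either I have mis-substituted a coefficient or the intended optimal solution is not literally $(h,v)=((q-1)/2,0)$ for every $q$ but rather a family whose value of $n$ still equals $(q+1)/2$. Accordingly, I would re-examine whether a different feasible point—perhaps increasing $v$ to absorb the slack in the third inequality while preserving $h+v=(q-1)/2$—keeps all four constraints satisfied for large $q$; the quantity that matters for the bound is only the sum $h+v$, not the individual values. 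The hard part will be confirming that some feasible $(h,v)$ with $h+v=(q-1)/2$ exists for every odd $q\geq 3$, since that sum is exactly what yields $n=q-(q-1)/2=(q+1)/2$.

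Once feasibility of a solution with $h+v=(q-1)/2$ is established for all admissible $q$, the conclusion is immediate: Theorem~\ref{thm:rapunzel} guarantees $m_T(K)\leq n = q-(h+v) = q-(q-1)/2 = (q+1)/2$, which is precisely the claimed bound. I would close by remarking that no separate minimization argument is needed for the corollary, since we only require \emph{some} feasible solution achieving this value of $n$; the optimality claim in the surrounding text merely asserts that $(q+1)/2$ cannot be improved by the one-braid algorithm, but the corollary itself follows from feasibility alone.
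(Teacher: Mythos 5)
Your strategy is indeed the paper's own: the paper justifies this corollary solely by asserting that $(h,v)=((q-1)/2,0)$ is an optimal solution of the system (\ref{eq:rapunzel}) for $p=2$ and all $q\geq 3$, then invoking Theorem \ref{thm:rapunzel} to get $n=q-(h+v)=(q+1)/2$. Your substitution check is the right test, and it exposes a genuine error in that assertion, though your arithmetic is off by one: with $p=2$, $h=(q-1)/2$, $v=0$, the third inequality is
\[
-3h-v-p+q+4=\tfrac{7-q}{2}\geq 0,
\]
so the paper's claimed solution is feasible precisely for $q\in\{3,5,7\}$ and fails for every odd $q\geq 9$ (not only for $q>5$ as you computed). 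Equivalently, in the notation of the one-braid algorithm, this choice gives $R=1$ while $|r|=(q-5)/2$ once $q\geq 5$, so the requirement $R\geq |r|$ fails for $q\geq 9$.

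The genuine gap is that your proposal stops exactly at the crux: you predict that some feasible $(h,v)$ with $h+v=(q-1)/2$ exists for every odd $q\geq 3$, but you never exhibit one, and without that the corollary is unproven. The missing verification is short, so it should be carried out. For $p=2$ the system (\ref{eq:rapunzel}) reads $2(h+v)\leq q$, $h+3v\leq q-2$, $3h+v\leq q+2$, and $h\geq v\geq 0$. Imposing $h+v=(q-1)/2$, the first inequality holds since $q-1\leq q$, the second becomes $v\leq (q-3)/4$, the third becomes $v\geq (q-7)/4$, and $h\geq v$ follows automatically from $v\leq (q-3)/4<(q-1)/4$. The interval $[(q-7)/4,\,(q-3)/4]$ has length $1$, hence contains an integer, and its right endpoint is nonnegative for $q\geq 3$; so $v:=\max\{0,\lceil (q-7)/4\rceil\}$ and $h:=(q-1)/2-v$ is a solution in $\N^2$ (for example $(h,v)=(3,1)$ for $q=9$, where the paper's $(4,0)$ fails). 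Theorem \ref{thm:rapunzel} then yields $m_T(K)\leq q-(q-1)/2=(q+1)/2$. Your closing remark is correct that only feasibility, not optimality, is needed; and as a sanity check, adding the second and third inequalities gives $4(h+v)\leq 2q$, so $(q-1)/2$ is indeed the largest achievable value of $h+v$ when $q$ is odd.
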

	
	\begin{cor}
		Let $K$ be a $(4,q)$-torus knot. If $q=9$, then $m_T(K)\leq 8$. If $q\geq 11$, then $m_T(K)\leq 3+(q-1)/2$.
		%For any $(4,q)$-torus knot $K$ with $q\geq 11$, let $n:=(q+5)/2$. Then $m_T(K)\leq n$.
	\end{cor}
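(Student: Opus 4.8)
The plan is to derive the corollary directly from Theorem~\ref{thm:rapunzel} by specializing the system~\eqref{eq:rapunzel} to $p=4$ and exhibiting feasible solutions $(h,v)$ that minimize $n=q-(h+v)$. Since $\gcd(4,q)=1$, the parameter $q$ is odd, and setting $p=4$ reduces the four inequalities to
\begin{equation*}
2(h+v)\le q-4,\qquad h+3v\le q-8,\qquad 3h+v\le q,\qquad h\ge v.
\end{equation*}
Because $n=q-(h+v)$, minimizing $n$ is the same as maximizing $h+v$. The first inequality gives $2(h+v)\le q-4$, and since $q$ is odd this forces $h+v\le (q-5)/2$. Hence no feasible solution can beat $n=q-(q-5)/2=(q+5)/2=3+(q-1)/2$, so it suffices to produce, for each admissible $q$, a feasible lattice point attaining $h+v=(q-5)/2$ whenever one exists.

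First I would dispatch the case $q\ge 11$, where the goal is a nonnegative integer pair with $h+v=(q-5)/2$ satisfying the remaining three inequalities. Substituting $h=(q-5)/2-v$, the second inequality $h+3v\le q-8$ becomes an upper bound $v\le (q-11)/4$, while the third inequality $3h+v\le q$ becomes a lower bound $v\ge (q-15)/4$; the condition $h\ge v$ is then automatic since $v\le (q-11)/4<(q-5)/4$. Thus the feasible values of $v$ form the interval $[\max(0,(q-15)/4),\,(q-11)/4]$, and I would check that this interval always contains an integer for odd $q\ge 11$: for $q\in\{11,13\}$ it contains $0$, and for $q\ge 15$ it is a closed interval of length one (as then $(q-15)/4\ge 0$). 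Any such $v$, for instance $v=\lfloor (q-11)/4\rfloor$, yields a feasible solution with $n=(q+5)/2=3+(q-1)/2$, and combined with the cap above this value of $n$ is in fact optimal.

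Finally I would treat $q=9$ separately, which is exactly why it is stated as an exception. The first inequality caps $h+v\le 2$, but the second inequality reads $h+3v\le 1$, so any solution with $h+v=2$ would satisfy $h+3v=(h+v)+2v\ge 2>1$ and is therefore infeasible. The best attainable value is $h+v=1$, realized by $(h,v)=(1,0)$, which satisfies all four inequalities and gives $n=9-1=8$. The main obstacle in the argument is the case $q\ge 11$: the difficulty is not any single estimate but verifying that the competing upper bound coming from the second inequality and lower bound coming from the third inequality still leave a lattice point on the line $h+v=(q-5)/2$ for every odd $q$, i.e.\ that balancing $h$ against $v$ never pinches off the feasible region.
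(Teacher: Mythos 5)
Your proof is correct, and it reaches the corollary by the same basic route as the paper---specializing the system \eqref{eq:rapunzel} of Theorem \ref{thm:rapunzel} to $p=4$ and producing solutions $(h,v)$ with $n=q-(h+v)=3+(q-1)/2$---but your verification is genuinely different from the paper's. The paper offers no analytic argument: it appeals to an integer linear programming computation (\texttt{toric.py}) and records closed-form optimal families in Table \ref{tab:one-braid-4}, namely $(1,0)$ for $q=9$, $(3,0)$ for $q=11$, $(k+1,k-3)$ for $q=4k+1$ with $k\geq 3$, and $(k+1,k-4)$ for $q=4k-1$ with $k\geq 4$, each of which can be checked by direct substitution. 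You instead analyze the feasible region itself: parity of $q$ caps $h+v$ at $(q-5)/2$, feasibility on that line reduces to $\max\bigl(0,(q-15)/4\bigr)\leq v\leq (q-11)/4$, and that interval always contains an integer for odd $q\geq 11$; the case $q=9$ is correctly shown to be pinched down to $h+v=1$. Your route buys two things the paper leaves implicit: a proof that no $(h,v)$ satisfying \eqref{eq:rapunzel} can beat $3+(q-1)/2$ (so the stated bound is the best the one-braid framework can give, not merely an achievable value), and independence from the computer search. The paper's route buys concreteness---explicit families $(h,v)$ one can feed directly into the one-braid algorithm to draw the mosaics. Note that for the corollary itself only feasibility is needed, since the theorem's minimizing $n$ is at most $q-(h+v)$ for any feasible pair; your optimality analysis is a worthwhile bonus rather than a logical necessity.
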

	
	Note that these bounds strictly improve upon the bound in Proposition \ref{naivebound}. In Section \ref{sec:braid2}, we further improve upon the bound in Corollary \ref{cor:first-2-bound} for all $q\geq 17$.
	
	% \begin{cor}
		%     Consider a $(3,q)$-torus knot $K$ with $q\geq 4$, so there exists some $k\in\N$ such that either $q=3k+1$ or $q=3k+2$. Then $m_T(K)\leq 2k+1$. Specifically:
		%     \begin{itemize}
			%         \item If $q=3k+1$, then we can put $K$ on a toric $(2k+1)$-mosaic with $2k$ on-mosaic crossings.
			%         \item If $q=3k+2$, then we can put $K$ on a toric $(2k+1)$-mosaic with $2k+2$ on-mosaic crossings.
			%     \end{itemize}
		% \end{cor}
	
	\begin{table}[]
		\centering
		\begin{tabular}{c|cccccccc}
			$q$                                                               & $7$ & $8$ & $10$ & $11$ & $13$ & $14$ & $16$ & $17$ \\ \hline
			\begin{tabular}[c]{@{}c@{}}Optimal\\solution $(h,v)$\end{tabular} & $(2,0)$  & $(3,0)$ & $(3,0)$ & $(4,0)$ & $(4,1)$  & $(5,0)$  & $(5,2)$ & $(5,2)$ 
			\\ \hline
			\begin{tabular}[c]{@{}c@{}}Upper bound\\ on $m_T$\end{tabular} & $5$  & $5$ & $7$ & $7$ & $8$  & $9$  & $9$ & $10$ 
		\end{tabular}
		\caption{Optimal solutions $(h,v)$ to the system in (\ref{eq:rapunzel}) and induced bounds on the toric mosaic number from Theorem \ref{thm:rapunzel} for various $(3,q)$-torus knots.}
		\label{tab:one-braid-3}
	\end{table}
	
	\begin{table}[]
		\centering
		\begin{tabular}{c|cccc}
			$q$                                                                & $9$     & $11$    & $4k+1$, $k\geq 3$ & $4k-1$, $k\geq 4$ \\ \hline
			\begin{tabular}[c]{@{}c@{}}Optimal\\ solution $(h,v)$\end{tabular} & $(1,0)$ & $(3,0)$ & $(k+1,k-3)$       & $(k+1,k-4)$       \\ \hline
			\begin{tabular}[c]{@{}c@{}}Upper bound\\ on $m_T$\end{tabular}     & $8$     & $8$     & $3+(q-1)/2$       & $3+(q-1)/2$      
		\end{tabular}
		\caption{Optimal solutions $(h,v)$ to the system in (\ref{eq:rapunzel}) and induced bounds on the toric mosaic number from Theorem \ref{thm:rapunzel} for all $(4,q)$-torus knots with $q\geq 9$.}
		\label{tab:one-braid-4}
	\end{table}
	
	To prove Theorem \ref{thm:rapunzel}, we provide the following algorithm to construct a toric $n$-mosaic representing $K$. The resulting mosaic has $h$ sets of $p-1$ horizontally arranged crossings across $h$ rows and $v$ sets of $p-1$ vertically arranged crossings across $p+v-2$ rows. We call this algorithm the ``one-braid'' algorithm\footnote{For more fun, we encourage the reader to call it the ``Rapunzel'' algorithm instead.} due to the mosaic's appearance.
	
	\begin{defn}
		Consider a $(p,q)$-torus knot with $(h,v)\in\N^2$ satisfying the assumptions of Theorem \ref{thm:rapunzel}, and let $n:=q-(h+v)$. 
		The \textbf{one-braid algorithm}
		%, also called the \textbf{Rapunzel algorithm}, 
		is the series of steps described below to construct a toric $n$-mosaic, as illustrated in Figure \ref{fig:one-braid-algo}. 
		\begin{figure}
			\centering
			\includegraphics[width=0.9\linewidth]{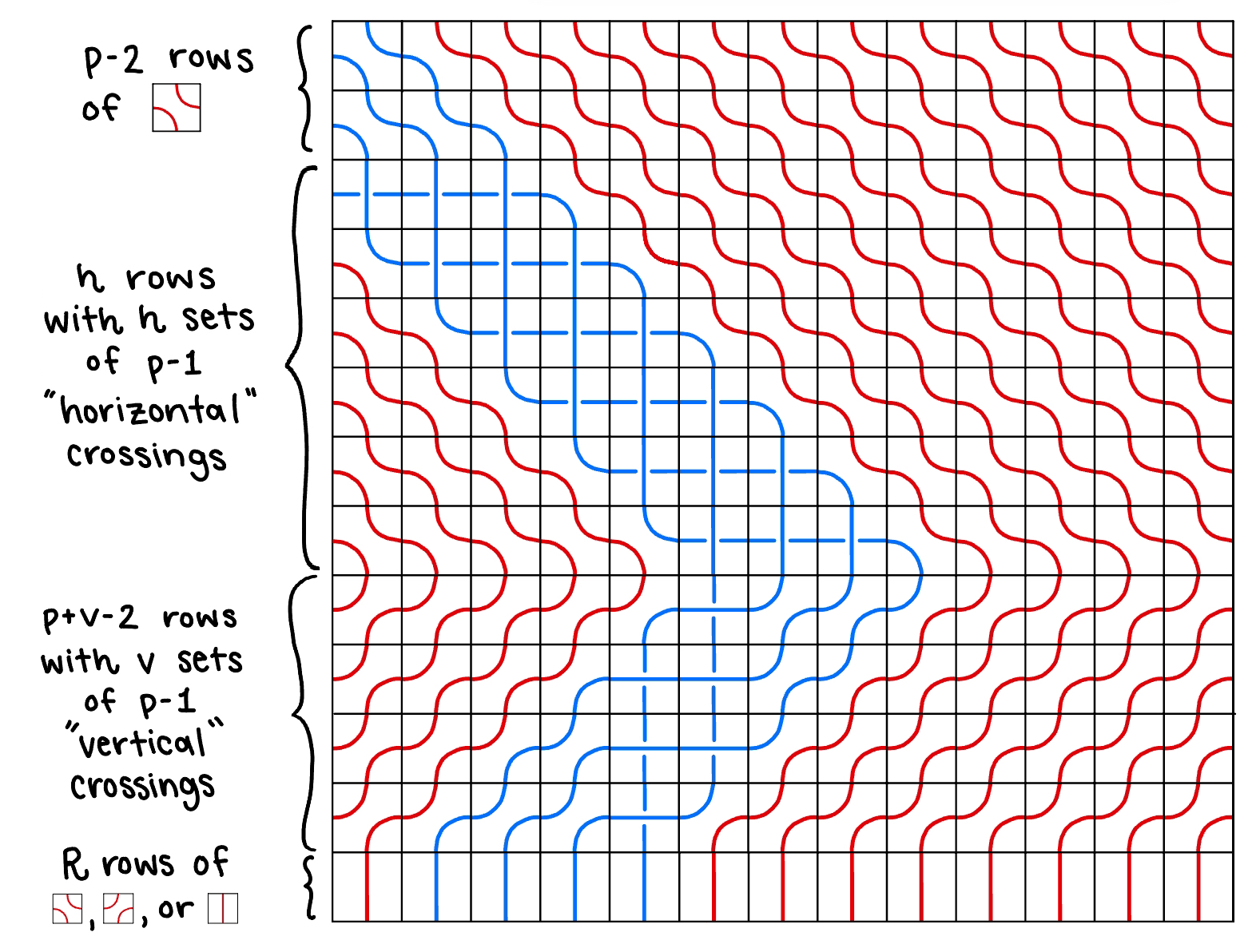}
			\caption{The ``one-braid'' algorithm. This example uses the optimal solution $(h,v)=(6,2)$ to construct a toric 13-mosaic representing the $(4,21)$-torus knot.}
			\label{fig:one-braid-algo}
		\end{figure}
	\end{defn}
	
	\begin{enumerate}
		\item On a blank $n\times n$ grid, label the rows $1,\dots,n$ from top to bottom. Let rows $1$ through $p-2$ be filled completely by $T_7$ tiles.
		\item For all integers $0\leq i \leq h-1$ and $1\leq j\leq p-1$, fill the tile in position $(p-1+i,i+j)$ with a $T_{10}$ tile. Note that this adds $p-1$ horizontally adjacent $T_{10}$ tiles to each of the $h$ rows labeled $p-1$ through $p-2+h$; these tiles are arranged in the manner represented in Figure \ref{fig:one-braid-algo}. Let all other tiles in these $h$ rows be $T_7$ tiles.
		\item For all integers $0\leq i \leq v-1$ and $1\leq j\leq p-1$, fill the tile in position $(p-2+h+i+j,h-i)$ with a $T_9$ tile. Note that this adds $p-1$ vertically adjacent $T_9$ tiles to each of the $v$ columns labeled $h-v+1$ through $h$; these tiles are arranged in the manner represented in Figure \ref{fig:one-braid-algo}. These tiles are contained in the $p+v-2$ rows labeled $p-1+h$ through $2p-4+h+v$; let all other tiles in these rows be $T_8$ tiles.
		\item Let $r:=p+v-h$, and let $R:=q-2(h+v+p)+4$. Note that the first three inequalities of (\ref{eq:rapunzel}) are equivalent to the inequalities $R\geq 0$, $R\geq r$, and $r\geq -R$, respectively. In particular, $R=|R|\geq |r|$.
		\item In steps (1) through (3), we filled $(p-2)+h+(p+v-2)=2p+h+v-4$ rows. Since $n=q-h-v$, the number of remaining blank rows is
		\[
		q-h-v-(2p+h+v-4)=q-2(h+v+p)+4 =R\geq 0. 
		\]
		Since $R\geq |r|$, we may consider the topmost $|r|$ of these blank rows. If $r\geq 0$, fill these rows with $T_8$ tiles; otherwise, fill them with $T_7$ tiles. Finally, fill the remaining $R-|r|$ rows at the bottom with $T_6$ tiles.
	\end{enumerate}
	
	%With that in mind, construct the toric $n$-mosaic produced by the following algorithm, illustrated in Figure \ref{fig:one-braid-algo}:
	By construction, the one-braid algorithm yields a suitably connected toric $n$-mosaic containing exactly $(h+v)(p-1)$ visible crossings. They occur between the same $p$ strands, namely those in the first $p-1$ tiles of the leftmost column.
	We will refer to these $p$ strands as the \emph{braid}. 
	
	To prove Theorem \ref{thm:rapunzel}, it will suffice to show that the toric mosaic produced by the one-braid algorithm represents the desired $(p,q)$-torus knot. To that end, we will show that this mosaic is topologically identical to an altered version of the classical representation of the $(p,q)$-torus knot, which we call an ``H-diagram'' due to its shape. See Figure \ref{fig:one-braid-H} for examples of H-diagrams when $p=2$.
	
	\begin{defn}
		Consider a $(p,q)$-torus knot $K$ with $(h,v)\in\N^2$ satisfying the assumptions of Theorem \ref{thm:rapunzel}. 
		%Consider the classical representation of $K$ depicting $p$ strands intersecting each vertical edge and $q$ strands intersecting each horizontal edge.
		We define the \textbf{H-diagram} of $K$ to be the diagram obtained by modifying the classical representation of $K$ (see Section \ref{sec:torus-knots}) through the steps described below. 
		\begin{figure}
			\centering
			\includegraphics[width=0.99\linewidth]{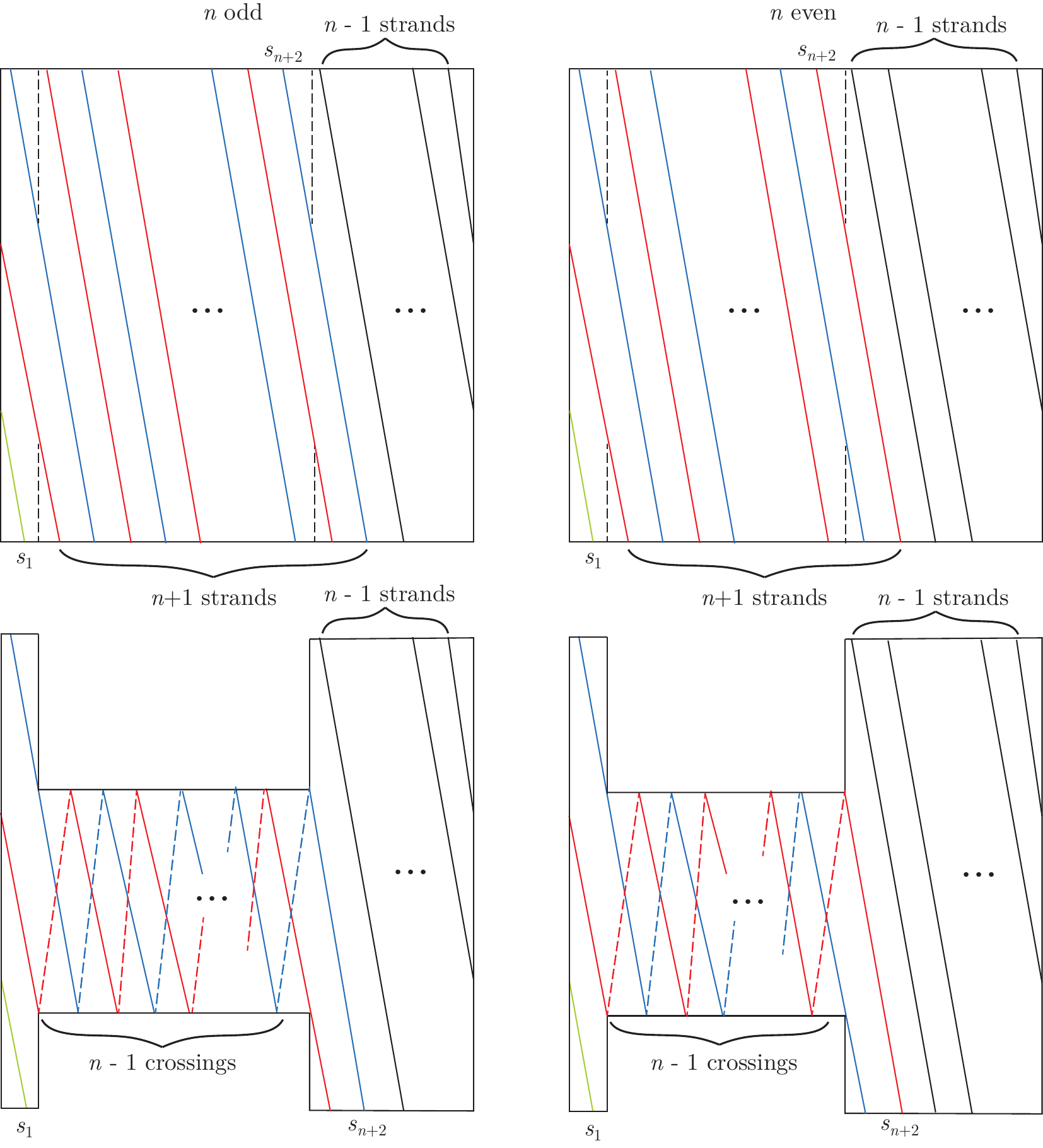}
			\caption{The one-cylinder ``H-diagram'' constructed from the classical representation of a $(2,q)$-torus knot. This example uses the optimal solution $(h,v)=((q-1)/2,0)$ to the system in (\ref{eq:rapunzel}), so $n=q-(h+v)=(q+1)/2$. Hence, $s_{h+v+p+1}=s_{n+2}$. Since $p=2$, there are 2 possible colors for $s_{n+2}$, determined by the parity of $n$.}
			\label{fig:one-braid-H}
		\end{figure}
	\end{defn}
	
	\begin{enumerate}
		\item Starting from the bottom-left corner of the diagram and moving toward the upper-right corner, label the strands of the diagram as $s_1,\dots,s_{p+q}$.
		\item Choose distinct colors labeled $1,\dots,p$. For all integers $1\leq i\leq h+v+p$, color $s_{i+1}$ with the color labeled $i\pmod{p}$.
		Choose another color labeled 0, and color only $s_1$ with it.
		\item Cut a vertical slit into the diagram starting at the midpoint of where $s_1$ and $s_2$ intersect the bottom edge of the diagram and ending at the unique point on this vertical line that intersects $s_2$. On this same vertical line, cut a second vertical slit from the top of the diagram stopping at the unique point on this line that intersects $s_p$. (This point exists since $s_1$ and $s_p$ are connected in the classical representation of $K$.)
		\item Cut two more vertical slits in a similar fashion, this time with $s_{h+v+1}$, $s_{h+v+2}$, and $s_{h+v+p+1}$ playing the roles of $s_1$, $s_2$, and $s_p$, respectively.
		\item Roll and glue the two rectangular regions of the diagram that the slits bound to obtain an ``H''-shaped diagram with a cylinder in the middle. Note that the slits on the top edge of the diagram have the same length, and the same goes for the bottom slits.
	\end{enumerate}
	
	By construction, all $h+v+p$ strands with nonzero colors wrap around the cylinder, so we may identify strands with the same colors as $p$ \emph{cylindrical strands}. Under this identification, the surface of the cylinder contains exactly one cylindrical strand of each nonzero color, and the only visible crossings occur on the surface of the cylinder between these $p$ strands. By construction, these crossings only involve the strands intersecting the bottom edge of the classical representation bounded by the two slits on the bottom, namely $s_2,\dots,s_{h+v+1}$. Each of these strands crosses the $p-1$ strands to its right on the classical representation.
	In other words, the H-diagram has exactly $(h+v)(p-1)$ visible crossings. These crossings all occur on the surface on the cylinder between the same $p$ cylindrical strands, each of which has a different nonzero color.

	% \begin{lemma}\label{lemma:one-braid}
		%     The toric $n$-mosaic resulting from the one-braid algorithm represents a $(p,q)$-torus knot. 
		% \end{lemma}
	
	With these facts established, we are now ready to prove Theorem \ref{thm:rapunzel}.
	
	\begin{proof}[Proof of Theorem \ref{thm:rapunzel}]%[Proof of Lemma \ref{lemma:one-braid}]
		
		We claim that there is a one-to-one correspondence between the strands of the H-diagram and the strands of the toric $n$-mosaic that preserves crossings and connection points.
		We begin by coloring the strands of the toric $n$-mosaic similarly to the strands of the H-diagram. Consider the first $p$ tiles in column 1. For all integers $1\leq i\leq p$, color the strand with a connection point at the left edge of the tile in position $(i,1)$ with the color labeled $i-1$. Color the remaining strand in the tile in position $(1,1)$ with the color labeled $p$. This makes the braid on our mosaic have $p$ colors.
		
		Now, consider the remaining colorless strands in column 1. By construction, none of these strands are a part of the braid. Make a cut containing these strands and glue it to the right side of the mosaic such that connection points are maintained. (This operation amounts to a rotation on the surface of the torus.) The resulting diagram, which we call an \emph{altered mosaic}, is represented at the bottom row of Figure \ref{fig:one-braid-finisher}. 
		
		With this identification, for any strand on the altered mosaic, one can consider its connection point at the top of the mosaic, traverse the strand downward, and stop when the strand connects to the bottom of the altered mosaic. The total signed number of columns one traverses in this manner, where rightward movement is positive, is given by counting how many rows contain $T_7$ tiles and how many contain $T_8$ tiles:
		\[
		(p-2)+h-(p+v+2)+r=h-v+r=h-v+(p+v-h)=p.
		\]
		%In other words, we have the following:
		%\begin{obs} \label{one-braid-movement}
		If we label the connection points at the top of our (altered or original) mosaic $t_1,\dots,t_n$ going left to right, then for all integers $1\leq i\leq n$, the strand with connection point $t_i$ connects directly to the strand with top connection point $t_j$ and the strand with top connection point $t_k$, where $j=i+p\pmod{n}$ and $k=i-p\pmod{n}$.
		%% \end{obs}
	If we replace $n$ in this statement with $q$, then the resulting statement holds for the classical representation of $K$.
	Since our H-diagram has exactly $n$ connection points on its top edge (not counting the surface of the cylinder as an edge, of course), this statement also holds for our H-diagram.
	
	Therefore, there is a one-to-one correspondence between the strands of the altered mosaic and the strands of our H-diagram that preserves colors, order, crossings, and connection points between strands. This completes the proof.
\end{proof}

\begin{figure}
	\centering
	\includegraphics[width=0.99\linewidth]{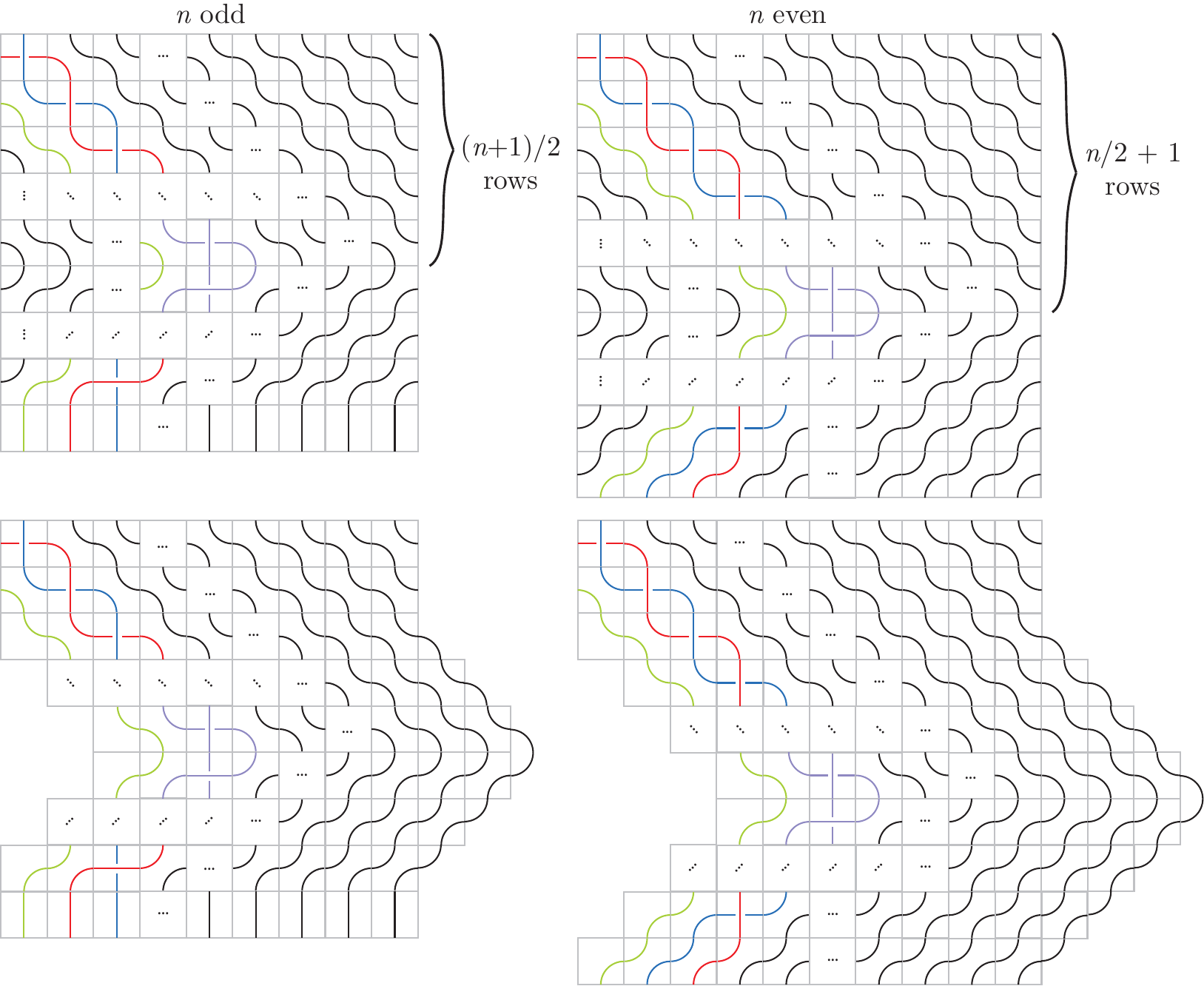}
	\caption{Exploiting the symmetry of the torus to move unbraided strands on the one-braid toric $n$-mosaic constructed for a $(2,q)$-torus knot. The resulting ``altered mosaic'' is naturally identified with the corresponding H-diagram (see Figure \ref{fig:one-braid-H}). In this image, purple denotes strands in the braid, which may be either red or blue.}
	\label{fig:one-braid-finisher}
\end{figure}

\begin{figure}
	\centering
	\includegraphics[width=0.9\linewidth]{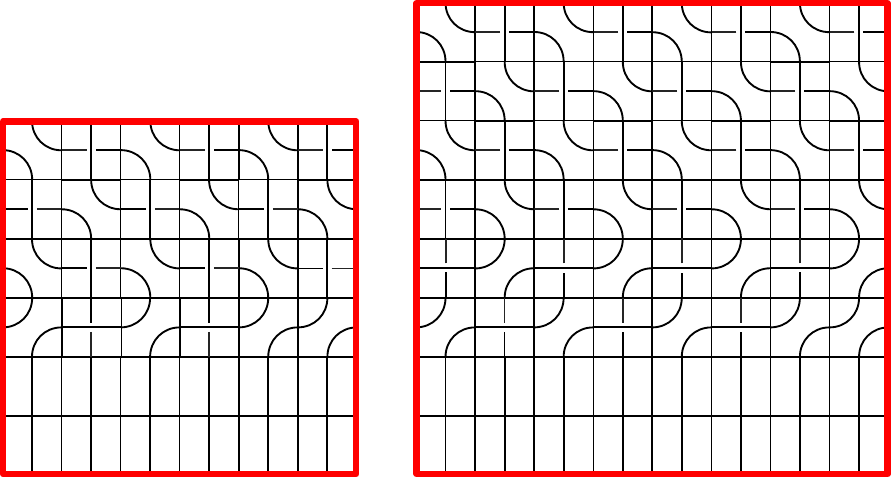}
	\caption{Toric $6$- and $8$-mosaics for the $(2,11)$- and $(2,23)$-torus knots constructed using the full-braid algorithm for $n=3,4$.}
	\label{fig:full-braid-1}
\end{figure}

% \subsection{Improved one-braid algorithm for certain $(3,q)$-torus knots}

% \luc{Should we include this section? Or will the full-braid algorithm section make it redundant?}

% \begin{theorem}
	%    Consider a $(3,q)$-torus knot $K$ with $q\geq 16$, so there exists some $k\geq 5$ such that either $q=3k+1$ or $q=3k+2$. Then $m_T(K)\leq k+5$. Specifically:
	%    \begin{itemize}
		%        \item If $q=3k+1$, then we can put $K$ on a toric $(k+5)$-mosaic with $4k-8$ on-mosaic crossings.
		%        \item If $q=3k+2$, then we can put $K$ on a toric $(k+5)$-mosaic with $4k-6$ on-mosaic crossings.
		% \end{itemize}
	% \end{theorem}

\subsection{Full-braid algorithm for $(2,q)$-torus knots} \label{sec:braid2}

In the previous section, we constructed toric mosaics representing $(p,q)$-torus knots using a braided configuration of strands. In this section, we adapt this idea to introduce multiple braids on toric mosaics when $p=2$, giving us even more efficient toric mosaics representing $(2,q)$-torus knots. For this reason, we call our construction the ``full-braid algorithm.''

The full-braid algorithm is as follows. Given any integer $n\geq 3$, consider the odd integer $q':=2n^2-2n-1$. We create a toric $2n$-mosaic representing the $(2,q')$-torus knot through the following steps. For examples of this construction for $n=3,4,5$, see Figures \ref{fig:full-braid-1} and \ref{fig:full-braid-2}.

\begin{figure}
	\centering
	\includegraphics[width=0.6\linewidth]{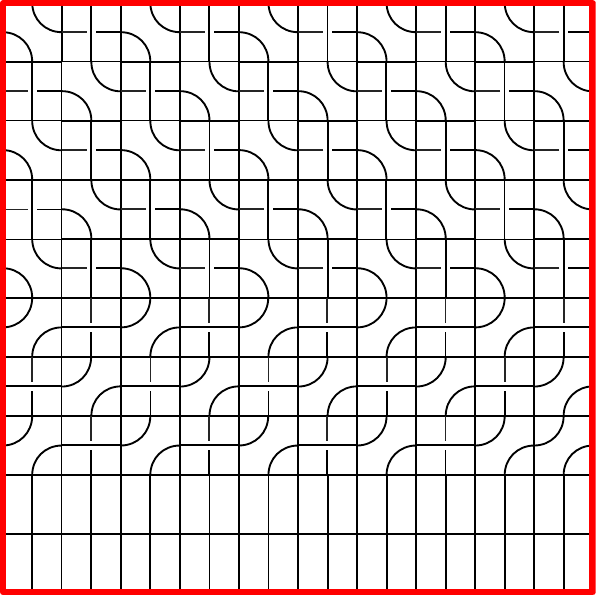}
	\caption{Toric $10$-mosaic for the $(2,39)$-torus knot constructed using the full-braid algorithm for $n=5$.}
	\label{fig:full-braid-2}
\end{figure}

% \textcolor{blue}{Luc: These crossing tiles are the opposite of what Leif had; I changed them to match how the crossing tiles are arranged in the Rapunzel algorithm (I remember at some point we said that Leif's crossings weren't the right ones). As a reality check, are these the crossing tiles we want?}

% \textcolor{red}{SP: I checked the one on the left figure and they look good to me! I'll post my weird notes in discord in case they help to convince you. :)}

% \textcolor{blue}{Luc: Looks great!! :)}

\begin{enumerate}
	\item Given a blank $2n\times 2n$ grid, fill the bottom two rows with $T_6$ tiles.
	\item For each integer $1\leq i\leq n$, fill row $i$ in the following way. If $i$ is odd (resp.\ even), fill every odd-numbered (resp.\ even-numbered) tile in row $i$ with a $T_7$ tile. Then, fill all the remaining tiles in row $i$ with $T_{10}$ tiles.
	\item In row $2(n-1)$, fill every even-numbered tile with a $T_8$ tile. Then, fill the rightmost tile in row $2(n-1)$ with another $T_8$ tile, and fill all other tiles in row $2(n-1)$ with a $T_9$ tile.
	\item Consider the remaining rows $n+1,n+2,\dots, 2(n-1)-1$. For each integer $n+1\leq i\leq 2(n-1)-1$, fill row $i$ in the following way. If $i$ is odd (resp.\ if $i$ is even), fill every odd-numbered (resp.\ even-numbered) tile in row $i$ with a $T_9$ tile. Then, fill all the remaining tiles in row $i$ with $T_8$ tiles.
\end{enumerate}

Note that these steps produce a suitably connected toric mosaic with $q'$ visible crossings, namely, $n^2$ tiles of type $T_{10}$ from step (2) along with $n(n-2)-1$ tiles of type $T_9$ from steps (3) and (4). These crossing tiles are distributed across $n$ braids; one of the braids contains $2n-3$ visible crossings, while the other $n-1$ braids each contain $2n-2$ visible crossings. 

Therefore, iterating the cutting-and-gluing proof method in the previous section $n$ times shows that the toric mosaic described above represents a $(2,q')$-torus knot; we leave the details to the reader. This yields the following bound on the toric mosaic number of $(2,q')$-torus knots.

\begin{theorem}\label{thm:full-braid}
	For any integer $n\geq 3$, let $q':=2n^2-2n-1$. Then the toric mosaic number of the $(2,q')$-torus knot is at most $2n$.
	% Define $f:\N\to\N$ by $n\mapsto 2n^2-2n-1$. Given an odd integer $q\geq 3$, choose $n\in\N$ such that $f(n-1)< q\leq f(n)$. Then the toric mosaic number of the $(2,q)$-torus knot is at most $2n$.
\end{theorem}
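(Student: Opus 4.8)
The plan is to follow the cut-and-glue strategy from the proof of Theorem \ref{thm:rapunzel}, now applied $n$ times, once for each of the $n$ braids produced by the full-braid algorithm. First I would record the structural data already established in the text: the full-braid mosaic is a suitably connected toric $2n$-mosaic whose only crossing tiles are the $n^2$ copies of $T_{10}$ from step (2) and the $n(n-2)-1$ copies of $T_9$ from steps (3) and (4), for a total of $q'$ visible crossings, all occurring within the $n$ braids. I would then make these braids precise by describing, for each braid, the two strands that enter and leave it together with the sequence of $T_9$/$T_{10}$ crossings between them; this is the $p=2$ analog of the single braid analyzed in Section \ref{sec:braid1}, with the distribution of $2n-3$ crossings on one braid and $2n-2$ crossings on each of the remaining $n-1$ braids.

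Next I would build the multi-cylinder analog of the H-diagram: color the strands of the mosaic as in the proof of Theorem \ref{thm:rapunzel} and perform the torus rotation (the cut-and-glue move) that relocates the unbraided strands, but now iterate this operation once per braid. Each application should unwind a single braid onto a cylinder while leaving the remaining braids intact, so that after $n$ iterations every braid has been replaced by a flat classical-representation strip and the diagram becomes the classical representation of a $(2,m)$-torus knot for some $m$.

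Finally I would identify $m$. As in the one-braid argument, labeling the top connection points of the resulting flat diagram $t_1,\dots,t_m$ and tracking the connection rule $t_i\mapsto t_{i\pm 2\pmod m}$ through the $n$ cylinders should show that the cumulative winding contributed by the braids expands the $2n$ mosaic strands into exactly $q'=2n^2-2n-1$ classical-representation strands, forcing $m=q'$. Since $q'$ is odd, $\gcd(2,q')=1$, which guarantees that the connection rule produces a single orbit and hence a single connected component, so the diagram is a knot rather than a link and is indeed the $(2,q')$-torus knot.

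I expect the main obstacle to be the bookkeeping across the $n$ iterations: keeping consistent track of which strands are identified, how the colors propagate from one cylinder to the next, and, most delicately, verifying that the accumulated winding yields exactly $q'$ crossings of a single consistent sign rather than some smaller torus knot or a split configuration. A useful cross-check is the crossing-number count: since the $(2,q')$-torus knot has crossing number $q'$ and the mosaic realizes exactly $q'$ visible crossings, any valid identification of the unwound diagram with a reduced $(2,m)$-torus diagram must have $m=q'$.
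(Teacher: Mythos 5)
Your proposal takes essentially the same approach as the paper: the paper's own argument consists of counting the $q' = n^2 + n(n-2) - 1$ visible crossings distributed across $n$ braids and then ``iterating the cutting-and-gluing proof method'' of Theorem \ref{thm:rapunzel} $n$ times, with the details explicitly left to the reader. Your multi-cylinder analog of the H-diagram, the iterated torus rotations, and the connection-rule bookkeeping (including the coprimality check that $\gcd(2,q')=1$) are precisely the details that iteration entails, so your plan is a correct and somewhat more explicit version of the paper's proof.
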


Table \ref{tab:full-braid} displays the upper bounds obtained for all integers $3\leq n\leq 12$ in Theorem \ref{thm:full-braid}.

\begin{figure}
	\centering
	\includegraphics[width=0.9\linewidth]{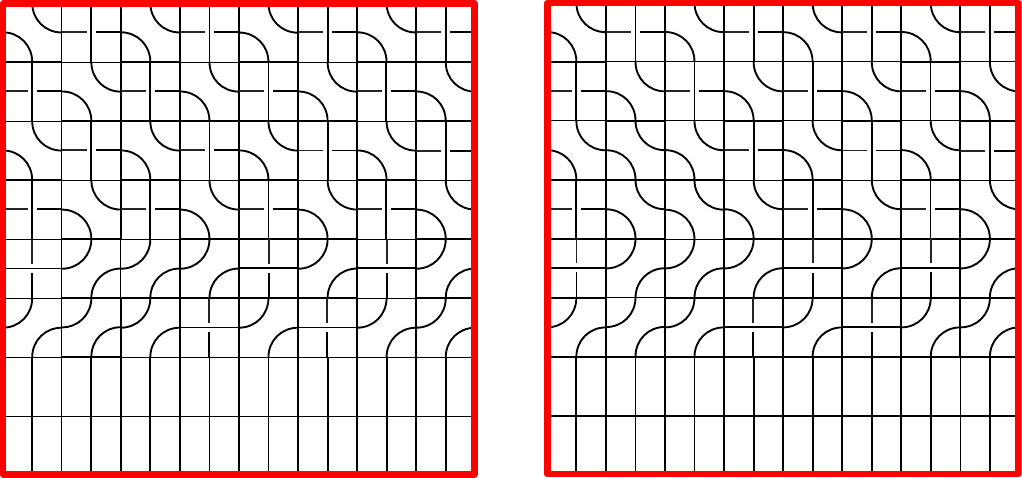}
	\caption{Toric $8$-mosaics for the $(2,21)$- and $(2,19)$-torus knots obtained by replacing crossing tiles in the toric mosaic for the $(2,23)$-torus knot in Figure \ref{fig:full-braid-1}.}
	\label{fig:full-braid-3}
\end{figure}

Thanks to the regular structure of $(2,q')$-torus knots, we can adapt the toric mosaic constructed above for a $(2,q')$-torus knot into a toric mosaic representing \emph{any} $(2,q)$-torus knot such that $3\leq q\leq q'$. This is accomplished by removing $q'-q$ consecutive crossings in a braid. In particular, we replace $q'-q$ consecutive crossing tiles (of types $T_9$ and $T_{10}$, respectively) with non-crossing double-strand tiles (of types $T_8$ and $T_7$, respectively); see Figure \ref{fig:full-braid-3} for examples. This observation yields the following corollary.

\begin{cor}\label{cor:full-braid}
	For any integer $n\geq 3$, and for any odd integer $q$ such that \[3\leq q \leq 2n^2-2n-1,\] the toric mosaic number of the $(2,q)$-torus knot is at most $2n$.
	% Define $f:\N\to\N$ by $n\mapsto 2n^2-2n-1$. Given an odd integer $q\geq 3$, choose $n\in\N$ such that $f(n-1)< q\leq f(n)$. Then the toric mosaic number of the $(2,q)$-torus knot is at most $2n$.
\end{cor}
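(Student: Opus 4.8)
The plan is to start from the toric $2n$-mosaic $M'$ produced by the full-braid algorithm for the $(2,q')$-torus knot, where $q'=2n^2-2n-1$, whose existence and correctness are guaranteed by Theorem \ref{thm:full-braid}. Since both $q$ and $q'$ are odd with $3\le q\le q'$, the difference $q'-q$ is a nonnegative even integer. I would then show that a purely local tile-replacement operation on $M'$ converts it into a toric $2n$-mosaic for the $(2,q)$-torus knot, without changing the grid size, so that $m_T\le 2n$ follows at once.

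The central operation is to replace a crossing tile by the corresponding non-crossing double-strand tile: a $T_{10}$ becomes a $T_7$, and a $T_9$ becomes a $T_8$. The first thing to check is that this preserves suitable connectivity. Each crossing tile $T_9,T_{10}$ carries connection points on all four edges, and the double-strand tiles $T_7,T_8$ carry connection points on exactly the same four edges, so swapping a crossing tile for the appropriate double-strand tile leaves every connection point—including those on identified edges—matched with its neighbors. Hence every intermediate diagram is again a suitably connected toric $2n$-mosaic.

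Next I would verify the topological effect of the replacement. Each braid in $M'$ is a stack of crossings between two strands, that is, a segment of the word $\sigma_1^{q'}$ whose closure is the $(2,q')$-torus knot. Replacing a crossing tile by its matching double-strand tile performs the oriented (parallel) smoothing of that crossing, which deletes one letter $\sigma_1$ from the braid word; carrying this out on $q'-q$ crossings that are consecutive along the knot reduces the word to $\sigma_1^{q}$, whose closure is precisely the $(2,q)$-torus knot. Because $q'-q$ is even, the final diagram is a knot rather than a two-component link, consistent with the hypothesis that $q$ is odd.

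The main obstacle I anticipate is bookkeeping the notion of ``consecutive'' crossings across the $n$-braid layout. Since up to $q'-q$ crossings may need to be removed while a single braid contains only $2n-2$ (or $2n-3$) crossings, the deleted crossings generally span several braids. I would therefore need to make precise how the braids compose along the knot—so that crossings removed from the ``end'' of one braid and the ``start'' of the next are genuinely adjacent in the word $\sigma_1^{q'}$—and confirm that smoothing them still yields $\sigma_1^{q}$ rather than a rearranged word. Once the linear ordering of all $q'$ crossings along the knot is fixed, inherited from the cutting-and-gluing argument underlying Theorem \ref{thm:full-braid}, the remaining check that the smoothed diagram is the claimed torus knot is routine, as illustrated by the examples in Figure \ref{fig:full-braid-3}.
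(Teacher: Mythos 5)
Your proposal is correct and follows essentially the same route as the paper: starting from the full-braid mosaic for the $(2,q')$-torus knot of Theorem \ref{thm:full-braid} and replacing $q'-q$ consecutive crossing tiles with the matching double-strand tiles ($T_{10}\to T_7$, $T_9\to T_8$), exactly the paper's construction illustrated in Figure \ref{fig:full-braid-3}. Your additional care about connectivity, the braid-word interpretation, and the bookkeeping when the removed crossings span several braids only fills in details the paper leaves to the reader.
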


Therefore, Table \ref{tab:full-braid} shows that the bound in Corollary \ref{cor:full-braid} is strictly stronger than the bound in Corollary \ref{cor:first-2-bound} for all $(2,q)$-torus knots with $q\geq 17$.

\begin{table}[]
	\centering
	\begin{tabular}{c|cccccccccc}
		$q$                                                               & $11$ & $23$ & $39$ & $59$ & $83$ & $111$ & $143$ & $179$ & $219$ & $263$ \\ \hline
		\begin{tabular}[c]{@{}c@{}}Upper bound\\ on $m_T$\end{tabular} & $6$  & $8$  & $10$ & $12$ & $14$ & $16$  & $18$  & $20$ & $22$ & $24$ 
	\end{tabular}
	\caption{Upper bounds on the toric mosaic number of various $(2,q)$-torus knots obtained from the full-braid algorithm in Theorem \ref{thm:full-braid}.}
	\label{tab:full-braid}
\end{table}

\section{Computer search} \label{sec:census}

In addition to algorithmic constructions, bounds on toric mosaic numbers can be obtained via a computer search. Our search was conducted using two programs: \texttt{mosaic-gen}, a \texttt{rust} program that produces a list of all suitably connected $n$-mosaics for a given $n$, and \texttt{toric.py}, a \texttt{python} program that catalogs those mosaics using the HOMFLY-PT polynomial. 

Full implementations and documentation for these programs can be found at \cite{github}. Toric mosaics are represented as base-11 numbers, obtained by flattening a mosaic's associated matrix as below.
\begin{equation*}
	\vcenter{\hbox{\includegraphics[scale=0.3]{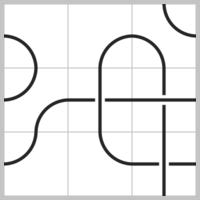}}} \Leftrightarrow \begin{bmatrix} 1 & 2 & 7 \\ 8 & 9 & 10 \\ 4 & 3 & 9\end{bmatrix} \Leftrightarrow 12789a439
\end{equation*}
A list of all suitably connected toric $n$-mosaics is then obtained by iterating over certain base-11 numbers of length $n^2$, and noting those mosaics that are suitably connected. 

From this list, low-crossing number knots can be readily identified using the HOMFLY-PT polynomial. Mosaics are first ``traced'' to produce a machine-readable planar diagram code, and discarded if they are links. If the mosaic is a knot, \texttt{Sagemath}'s \texttt{Links} package is used to calculate the knot's HOMFLY-PT polynomial, which is checked against a pre-computed list to identify the knot.

\subsection{Results}
Using these programs, we produced a complete census of those knots realizable on a toric 3-mosaic, and a partial census of those knots realizable on a toric 4-mosaic. Notable results are included below, and complete censuses are available in Appendix \ref{apd:censuses}. 

\begin{theorem}\label{thm:alg-1}
	There are exactly 9 prime knots realizable on a toric 3-mosaic, namely the unknot, $3_1$, $4_1$, $5_1$, $5_2$, $7_1$, $8_{19}$, $10_{124}$, $10_{139}$, and $10_{145}$.
\end{theorem}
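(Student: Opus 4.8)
The plan is to prove this as an exhaustive-enumeration result, so the ``proof'' is really a correctness argument for the computer search described in Section \ref{sec:census}, together with a hand-verifiable certificate for each of the nine knots. First I would establish the \emph{completeness} of the enumeration: every suitably connected toric $3$-mosaic corresponds to a base-$11$ string of length $9$, so iterating over all $11^9$ such strings and filtering for suitable connectivity (checking that the connection-point parities of each tile match its four neighbors \emph{including} the wrap-around neighbors forced by edge identification) produces a provably complete list of candidate mosaics. I would note that this search space, while large, is finite and that suitable connectivity is a purely local combinatorial condition, so no mosaic is missed. It is worth remarking explicitly that the search enumerates link diagrams, and the statement concerns only knots, so the filtering step that discards multi-component diagrams is where links are removed; I would point out that the nine listed knots are the prime knots surviving this filter.

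Next I would address \emph{soundness of identification}: each surviving knot mosaic is traced into planar-diagram code and assigned its HOMFLY-PT polynomial via \texttt{Sagemath}, then matched against a precomputed table. The logical gap to close here is that the HOMFLY-PT polynomial does not distinguish all knots, so matching a polynomial does not by itself certify the knot type. For knots with at most $10$ crossings, however, HOMFLY-PT \emph{does} separate the relevant classes in all but a few known mutant pairs (e.g.\ the Conway and Kinoshita--Terasaka knots, which have $11$ crossings and thus fall outside the census range); I would argue that within the list of candidate knot types realizable on so small a grid, the polynomial is a complete invariant, and I would cite the crossing-number bound to rule out ambiguous collisions. This step is the main obstacle, because it requires certifying that the HOMFLY-PT fingerprint is injective on exactly the set of knots that can appear, rather than on all knots.

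Finally I would supply the \emph{existence direction} by exhibiting, for each of the nine knots, an explicit suitably connected toric $3$-mosaic realizing it, giving its base-$11$ encoding so a reader can verify suitable connectivity and retrace the diagram by hand. Several of these follow from earlier constructions: the unknot and $3_1$ come from Observation \ref{initial-obs} and the Proposition characterizing $m_T=1,2$, while $5_1$, $7_1$, and the torus knots among the list are covered by Theorem \ref{thm:rapunzel} and Corollary \ref{cor:first-2-bound} (for instance $m_T(5_1)=m_T((2,5))\le 3$ and $m_T(7_1)=m_T((2,7))\le 4$, though I would double-check the $3$-mosaic realizability directly rather than relying on the $(q+1)/2$ bound, which only forces $\le 4$ for $q=7$). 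The genuinely new content is that no \emph{other} prime knot appears, and that these particular non-torus knots ($4_1$, $5_2$, $8_{19}$, $10_{124}$, $10_{139}$, $10_{145}$) do. I would present the nine encodings in a table and cross-reference the full census in Appendix \ref{apd:censuses}, so that the theorem reduces to two independently checkable claims: the search covered every mosaic, and each reported polynomial match is unambiguous in the stated crossing range.
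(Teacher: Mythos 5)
Your proposal takes the same route as the paper: Theorem \ref{thm:alg-1} is established there precisely by the computer search of Section \ref{sec:census}, and your three steps (completeness of the base-$11$ enumeration, soundness of HOMFLY-PT identification, explicit mosaic certificates) are a correctness argument for that search, with the certificates being the ones tabulated in Appendix \ref{apd:censuses}. The one place you go beyond the paper --- the soundness of polynomial identification --- is also where your argument has a genuine gap. You propose to rule out HOMFLY-PT collisions by noting that the polynomial separates knots of at most $10$ crossings and that the smallest mutant pair (Conway/Kinoshita--Terasaka) has $11$ crossings, ``outside the census range.'' But the census range constrains the \emph{mosaics}, not the crossing numbers of the knots they carry: a toric $3$-mosaic can have up to $9$ visible crossings and, by the paper's first Observation (with $A,B\leq 3$), up to $9$ hidden crossings, so a priori its diagrams can realize knots of crossing number up to $18$ --- a range squarely containing the $11$-crossing mutant pairs and other HOMFLY-PT coincidences. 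Your fallback, that the polynomial is ``injective on exactly the set of knots that can appear,'' is circular, since that set is precisely what the search is supposed to determine. A sound repair must either (i) bound the diagram crossing number as above and check, against knot tables, that no knot of crossing number at most $18$ shares its HOMFLY-PT polynomial with any matched knot (including the unknot, where this is known from verified computations on the Jones polynomial), or (ii) identify each mosaic by a genuinely complete method, e.g.\ explicit Reidemeister reductions or invariants of the complement. The paper itself is silent on this issue, so your instinct to raise it is exactly right; only your resolution fails to close it.

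A second, more practical problem: the cross-referencing you propose against Appendix \ref{apd:censuses} would not confirm the statement but contradict it. The appendix lists $10_{132}$ with a toric $3$-mosaic certificate (\texttt{139913391}) even though $10_{132}$ is absent from the theorem's list, while $10_{145}$, which is in the list, is tabulated with $m_T=4$ and a $16$-digit (i.e.\ $4$-mosaic) certificate; likewise $5_2$ is tabulated with $m_T=4$ but a $9$-digit certificate. So as the paper stands there is no certificate at all for realizing $10_{145}$ on a toric $3$-mosaic, and the consistency check in your plan would (correctly) flag the theorem's list as inconsistent with the census --- most plausibly $10_{145}$ in the statement is an error for $10_{132}$. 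Your existence step should therefore verify the nine certificates directly and report the discrepancy, rather than cite the theorem's list and the appendix as mutually supporting.
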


\begin{theorem}\label{thm:alg-2}
	Every knot of crossing number less than or equal to 9 may be realized on a toric 4-mosaic.
\end{theorem}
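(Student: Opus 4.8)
The plan is to establish Theorem \ref{thm:alg-2} by exhaustive verification: for each of the 84 prime knots with crossing number at most 9 (together with the unknot and any composite knots up to 9 crossings), exhibit an explicit suitably connected toric 4-mosaic realizing it. Since the claim is a finite existence statement, a computer search over toric 4-mosaics is the natural engine. First I would recall the encoding described in Section \ref{sec:census}: a toric 4-mosaic is a $4\times 4$ matrix of tile types $T_0,\dots,T_{10}$, flattened into a base-11 number of length $16$. I would then iterate over the suitably connected mosaics in this space, trace each into planar diagram code, discard those representing links, and compute the HOMFLY-PT polynomial of each resulting knot using the same \texttt{toric.py} pipeline already validated for the toric 3-mosaic census in Theorem \ref{thm:alg-1}.

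The key steps, in order, are: (1) generate the complete list of suitably connected toric 4-mosaics, respecting the longitudinal edge-identification convention and accounting for hidden crossings as in Observation 1; (2) filter out the link diagrams, keeping only single-component knots; (3) compute the HOMFLY-PT polynomial for each surviving knot mosaic and match it against the precomputed table of HOMFLY-PT polynomials for all knots through 9 crossings; and (4) confirm that every knot of crossing number at most 9 appears at least once in the resulting catalog. Because the HOMFLY-PT polynomial does not distinguish all knots, I would flag any coincidences among knots up to 9 crossings and resolve the ambiguous cases with a secondary invariant (for instance, the Jones or Alexander polynomial, or a direct comparison of the traced diagrams), so that each realized knot is identified unambiguously.

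The main obstacle is computational feasibility rather than mathematical subtlety. The raw search space has $11^{16}$ matrices, which is far too large to enumerate naively, so the essential work is to prune aggressively: enforce suitable connectivity incrementally as the grid is filled (as \texttt{mosaic-gen} does), exploit the dihedral and translational symmetries of the torus to collapse equivalent mosaics, and bound the number of crossing tiles so that only diagrams capable of producing up to 9-crossing knots are examined. A secondary difficulty is that, unlike the 3-mosaic case, the 4-mosaic census is only \emph{partial} (as noted in the text), so I must be careful that the search is complete enough to guarantee realizability of every target knot even if it does not enumerate every toric 4-mosaic. In practice this means running the search until each of the 84-plus knots has been matched, rather than until the entire space is exhausted, and then certifying the result by exhibiting one explicit mosaic per knot in Appendix \ref{apd:censuses}.
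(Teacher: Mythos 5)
Your proposal is correct and follows essentially the same route as the paper: the theorem is proved by the computer search of Section \ref{sec:census} (generating suitably connected toric 4-mosaics with \texttt{mosaic-gen}, identifying knots via HOMFLY-PT in \texttt{toric.py}), with the census in Appendix \ref{apd:censuses} exhibiting an explicit base-11 mosaic for each knot through 9 crossings. Your added care about HOMFLY-PT coincidences and the partial nature of the 4-mosaic search are sensible refinements of, not departures from, the paper's argument.
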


% These results allow us to determine exact toric mosaic numbers for all knots with crossing number less than 10, along with a number of 10-crossing and higher knots which can be realized on toric 4-mosaics.

\section{Further questions} \label{sec:questions}

For all odd $q\geq 17$, the full-braid algorithm in Section \ref{sec:braid2} provides sharper bounds on the toric mosaic numbers of $(2,q)$-torus knots than the one-braid algorithm in Section \ref{sec:braid1}. For this reason, it would be desirable to generalize the full-braid algorithm to all torus knots:

\begin{question}
	Can the full-braid algorithm in Section \ref{sec:braid2} be adapted for $(p,q)$-torus knots with $q>p\geq 3$?
\end{question}

Our findings in Section \ref{sec:census} suggest that the inequality in Observation \ref{initial-obs} is strict even for hyperbolic knots and satellite knots, leading us to ask the following:

\begin{question}
	Is it true that $m_T(K)<m(K)$ for all knots $K$?
\end{question}

By identifying the edges of a mosaic in different ways, we can also embed knot mosaics into surfaces like Klein bottles, projective planes, cylinders, and M\"obius strips. For each of these surfaces, we can define an analogue of the toric mosaic number, leading us to ask the following question:
\begin{question}
	When embedding knot mosaics into surfaces other than tori, what is the corresponding mosaic number of a given knot?
\end{question}

Furthermore, other embeddings of the torus could be considered. We ask:

\begin{question}
	How does the toric mosaic number of a knot change if a non-standard embedding is chosen?
\end{question}

\begin{question}
	What is the minimum toric mosaic number of knot when taken over all possible embeddings of the torus? 
\end{question}

Finally, toric mosaics have a natural relationship to virtual knot mosaics, first introduced in  \cite{virtualHenrich}. In particular, a virtual knot can be viewed as a knot on a surface, where the induced crossings correspond to virtual crossings. So, any hidden crossings induced by the toric mosaic can be viewed as “virtual.” Among others, some questions one could ask are:

\begin{question}
	Are there classes of virtual knots for which the toric mosaic number can be computed? 
\end{question}
\begin{question}
	Are there upper bounds on the toric mosaic number for infinite classes of virtual knots? 
\end{question}

% \textcolor{red}{SP: I commented out the Legendrian toric questions. Since Legendrian knots have rigid shape requirements, I'm not sure it makes sense to identify edges in the same way as with smooth knots.}

% Finally, it would be interesting to consider toric versions of the Legendrian knot mosaics studied in \cite{legendrian1} and \cite{legendrian2}. The following questions adapt some of the questions proposed in \cite{legendrian1} to the toric setting:

% \begin{question}
	%     Are there bounds on the toric mosaic numbers of Legendrian knots in terms of their classical invariants?
	% \end{question}
% \begin{question}
	%     Can stabilizing a Legendrian knot reduce its toric mosaic number?
	% % \end{question}

\section*{Acknowledgments}
We would like to thank Allison Henrich for pointing us toward the literature on virtual knot mosaics. We would also like to thank Leif Schaumann for providing graphical templates and inspiring the full-braid algorithm in Section \ref{sec:braid2}.
We would also like to thank Kate Kearney, Lisa Traynor, and Peyton Wood for discussing toric link mosaics with us during the 2024 UnKnot V conference hosted at Seattle University.

This research was done at Moravian University as part of the Research Challenges of Computational Methods in Discrete Mathematics REU; it was funded by the National Science Foundation (MPS-2150299). 

\bibliography{references}

\pagebreak

\begin{center}
	Author contact information: \\
	\vskip 1cm
	Kendall Heiney\\
	Department of Mathematics, Cedar Crest College\\
	100 College Dr., Allentown, PA 18104, USA \\
	\href{mailto:kmheiney@cedarcrest.edu}{\tt kmHeiney@cedarcrest.edu}\\
	\ \\
	
	Margaret Kipe\\
	Department of Mathematics, University of Pittsburgh\\
	4200 Fifth Ave., Pittsburgh, PA 15260, USA\\
	\href{mailto:marge.k@pitt.edu}{\tt marge.k@pitt.edu}\\
	\ \\
	
	Samantha Pezzimenti\\
	Department of Mathematics, Penn State Brandywine\\
	25 Yearsley Mill Road, Media, PA 19063, USA\\
	\href{mailto:spezzimenti@psu.edu}{\tt spezzimenti@psu.edu}\\
	\ \\
	
	Kaelyn Pontes\\
	Department of Mathematics, Hastings College\\
	710 Turner Ave., Hastings, NE 68901, USA\\
	\href{mailto:kaelyn.pontes@hastings.edu}{\tt kaelyn.pontes@hastings.edu}\\
	\ \\
	
	L\d\uhorn c Ta\\
	Department of Mathematics, Yale University\\
	219 Prospect St., New Haven, CT 06511, USA\\
	%\ken{Luc why is there no address for Yale, pls help??} \\
	% \luc{lol} \\
	\href{mailto:luc.ta@yale.edu}{\tt luc.ta@yale.edu}\\
	\ \\
	
\end{center}

\appendix

%\section{Constructions}
\twocolumn[
\section{Census of Toric Mosaic Numbers} \label{apd:censuses}
In this section, we present bounds on toric mosaic number for all knots of crossing number 10 or lower. For knots with a computationally verified minimal toric mosaic, a base-11 representation is included as in Section \ref{sec:census}. Images of these mosaics can be generated from these numbers via \texttt{toric.py}, available at \cite{github}. Knots $10_{163}$--$10_{166}$ in Rolfsen's table have been renumbered as $10_{162}$--$10_{165}$ to account for the Perko pair.
\vspace{12pt}
]
\tablecaption{Bounds on toric mosaic number for knots through 10 crossings.}

\setlength{\tabcolsep}{4pt}
\tablehead{
\hline
\cellcolor{black!15}$K$   & \cellcolor{black!15}$m_T(K)$  & \cellcolor{black!15}Minimal Mosaic \\
\hline
}
\tabletail{\hline}
\begin{supertabular}[h]{|c c r|}
    $0_1$ & $1$ & \texttt{7} \\
    $3_1$ & $2$ & \texttt{7779} \\
    $4_1$ & $3$ & \texttt{12789a439} \\
    $5_1$ & $3$ & \texttt{294942429} \\
    $5_2$ & $4$ & \texttt{7778998a9} \\
    $6_1$ & $4$ & \texttt{0066127789aa4399} \\
    $6_2$ & $4$ & \texttt{0066127789a94399} \\
    $6_3$ & $4$ & \texttt{0066127779a9439a} \\
    $7_1$ & $3$ & \texttt{88889989a} \\
    $7_2$ & $4$ & \texttt{0316129797a93946} \\
    $7_3$ & $4$ & \texttt{0284289198a84399} \\
    $7_4$ & $4$ & \texttt{02742a9179a9439a} \\
    $7_5$ & $4$ & \texttt{028428a1979a4399} \\
    $7_6$ & $4$ & \texttt{028428919989439a} \\
    $7_7$ & $4$ & \texttt{027429a18a9a43a9} \\
    $8_1$ & $4$ & \texttt{0391127999a73994} \\
    $8_2$ & $4$ & \texttt{029429719a984389} \\
    $8_3$ & $4$ & \texttt{127779a9979a4397} \\
    $8_4$ & $4$ & \texttt{037112799a9739a4} \\
    $8_5$ & $4$ & \texttt{127789a98a99439a} \\
    $8_6$ & $4$ & \texttt{127789a98999439a} \\
    $8_7$ & $4$ & \texttt{0371129797a93994} \\
    $8_8$ & $4$ & \texttt{1277787979a9439a} \\
    $8_9$ & $4$ & \texttt{127779a9a978439a} \\
    $8_{10}$ & $4$ & \texttt{1277787979a9439a} \\
    $8_{11}$ & $4$ & \texttt{0371129999a73a94} \\
    $8_{12}$ & $4$ & \texttt{127779a8a799439a} \\
    $8_{13}$ & $4$ & \texttt{028429a1989a43a9} \\
    $8_{14}$ & $4$ & \texttt{127779a9979743a9} \\
    $8_{15}$ & $4$ & \texttt{0371129979a93a94} \\
    $8_{16}$ & $4$ & \texttt{1277789779a9439a} \\
    $8_{17}$ & $4$ & \texttt{127779a97799439a} \\
    $8_{18}$ & $4$ & \texttt{1289989aaa7743a9} \\
    $8_{19}$ & $3$ & \texttt{888888899} \\
    $8_{20}$ & $4$ & \texttt{02842881899a43a9} \\
    $8_{21}$ & $4$ & \texttt{028428918a9943a9} \\
    $9_1$ & $4$ & \texttt{7778a989787a89a9} \\
    $9_2$ & $4$ & \texttt{77797aa989aa8997} \\
    $9_3$ & $4$ & \texttt{1297898a98a84399} \\
    $9_4$ & $4$ & \texttt{127799a7997a4397} \\
    $9_5$ & $4$ & \texttt{039112799a9739a4} \\
    $9_6$ & $4$ & \texttt{1287899a98a84399} \\
    $9_7$ & $4$ & \texttt{7778a89997a7797a} \\
    $9_8$ & $4$ & \texttt{1289889a99a74398} \\
    $9_9$ & $4$ & \texttt{7778a979789a97a9} \\
    $9_{10}$ & $4$ & \texttt{7779897a89a97a97} \\
    $9_{11}$ & $4$ & \texttt{127979a9979a4378} \\
    $9_{12}$ & $4$ & \texttt{127779a9997a4397} \\
    $9_{13}$ & $4$ & \texttt{127979a9a978439a} \\
    $9_{14}$ & $4$ & \texttt{7778a889889a9979} \\
    $9_{15}$ & $4$ & \texttt{1289899a9a9743a8} \\
    $9_{16}$ & $4$ & \texttt{7778a899798aa8a9} \\
    $9_{17}$ & $4$ & \texttt{1288a89999a74398} \\
    $9_{18}$ & $4$ & \texttt{127979a8a979439a} \\
    $9_{19}$ & $4$ & \texttt{127979a997984379} \\
    $9_{20}$ & $4$ & \texttt{127789a99a9843a9} \\
    $9_{21}$ & $4$ & \texttt{128a89a99a8843a9} \\
    $9_{22}$ & $4$ & \texttt{127798a99a9843a9} \\
    $9_{23}$ & $4$ & \texttt{7778a9799a7778a9} \\
    $9_{24}$ & $4$ & \texttt{127998a7979a43a9} \\
    $9_{25}$ & $4$ & \texttt{127799a8a799439a} \\
    $9_{26}$ & $4$ & \texttt{127799a7979a4379} \\
    $9_{27}$ & $4$ & \texttt{127979a9a798439a} \\
    $9_{28}$ & $4$ & \texttt{127799a7979a43a9} \\
    $9_{29}$ & $4$ & \texttt{7778a9789a977979} \\
    $9_{30}$ & $4$ & \texttt{12878a9a89a94399} \\
    $9_{31}$ & $4$ & \texttt{127779a9979a43a9} \\
    $9_{32}$ & $4$ & \texttt{127779a9979a4379} \\
    $9_{33}$ & $4$ & \texttt{7778a8a98a9a97a9} \\
    $9_{34}$ & $4$ & \texttt{77aaa97789799a98} \\
    $9_{35}$ & $4$ & \texttt{7779899a8a998a97} \\
    $9_{36}$ & $4$ & \texttt{127979a997974379} \\
    $9_{37}$ & $4$ & \texttt{127779a9979a4397} \\
    $9_{38}$ & $4$ & \texttt{7779899889a98a9a} \\
    $9_{39}$ & $4$ & \texttt{78a98978989979a9} \\
    $9_{40}$ & $4$ & \texttt{7a987a89989a88a9} \\
    $9_{41}$ & $4$ & \texttt{1289989889a94398} \\
    $9_{42}$ & $4$ & \texttt{02842891899a43a9} \\
    $9_{43}$ & $4$ & \texttt{0284289197a94399} \\
    $9_{44}$ & $4$ & \texttt{0371127979a93994} \\
    $9_{45}$ & $4$ & \texttt{0284299199a7439a} \\
    $9_{46}$ & $4$ & \texttt{02842891989a43a9} \\
    $9_{47}$ & $4$ & \texttt{02842891989a43a9} \\
    $9_{48}$ & $4$ & \texttt{028429a18a9a43a9} \\
    $9_{49}$ & $4$ & \texttt{02842891989a43a9} \\
    $10_{1}$ & $4$ & \texttt{78a8979a9978a997} \\
    $10_{2}$ & $4$ & \texttt{787a88a97a98a989} \\
    $10_{3}$ & $4$ & \texttt{127999a7997a4397} \\
    $10_{4}$ & $4$ & \texttt{78a8779aa9789a97} \\
    $10_{5}$ & $4$ & \texttt{787979a9997a7997} \\
    $10_{6}$ & $4$ & \texttt{787a89a989989a89} \\
    $10_{7}$ & $4$ & \texttt{127979a9997a4397} \\
    $10_{8}$ & $4$ & \texttt{7a99798a88a989a8} \\
    $10_{9}$ & $4$ & \texttt{8788a998899a99a8} \\
    $10_{10}$ & $4$ & \texttt{787a88a98a98a989} \\
    $10_{11}$ & $4$ & \texttt{787979a99a78a997} \\
    $10_{12}$ & $4$ & \texttt{787799a7779aa997} \\
    $10_{13}$ & $4$ & \texttt{787979a99777a997} \\
    $10_{14}$ & $4$ & \texttt{787a89a989989a98} \\
    $10_{15}$ & $4$ & \texttt{7779899a89797a97} \\
    $10_{16}$ & $4$ & \texttt{7a98998a97a88989} \\
    $10_{17}$ & $\geq4$ & n/a\\
    $10_{18}$ & $4$ & \texttt{78a999788a89a998} \\
    $10_{19}$ & $4$ & \texttt{887989a9aa88989a} \\
    $10_{20}$ & $4$ & \texttt{787979a99978a997} \\
    $10_{21}$ & $4$ & \texttt{787a89a9a7989a89} \\
    $10_{22}$ & $4$ & \texttt{7778a97978a9979a} \\
    $10_{23}$ & $4$ & \texttt{787979a97798a979} \\
    $10_{24}$ & $4$ & \texttt{787979a99797799a} \\
    $10_{25}$ & $4$ & \texttt{787979a97a98a979} \\
    $10_{26}$ & $4$ & \texttt{7779899a89a97a97} \\
    $10_{27}$ & $4$ & \texttt{7778a9797a9797a9} \\
    $10_{28}$ & $4$ & \texttt{787a89a9989879a9} \\
    $10_{29}$ & $4$ & \texttt{787a89a98a9879a9} \\
    $10_{30}$ & $4$ & \texttt{7779899a9a79a897} \\
    $10_{31}$ & $4$ & \texttt{7a9779897798a799} \\
    $10_{32}$ & $4$ & \texttt{7779899a98a97a97} \\
    $10_{33}$ & $4$ & \texttt{78a9977a79a8a979} \\
    $10_{34}$ & $4$ & \texttt{787a88a99798a989} \\
    $10_{35}$ & $4$ & \texttt{7a9797a8a889a998} \\
    $10_{36}$ & $4$ & \texttt{787979a8779aa979} \\
    $10_{37}$ & $4$ & \texttt{787979a87a9aa997} \\
    $10_{38}$ & $4$ & \texttt{7778a9799a9778a9} \\
    $10_{39}$ & $4$ & \texttt{787a89a9a7989a98} \\
    $10_{40}$ & $4$ & \texttt{787979a9a7987a9a} \\
    $10_{41}$ & $4$ & \texttt{7a8aa989989888a9} \\
    $10_{42}$ & $4$ & \texttt{7778a8a99a9779a9} \\
    $10_{43}$ & $4$ &\texttt{78a97a87799aa979} \\
    $10_{44}$ & $4$ & \texttt{7778a8a9989a89a9} \\
    $10_{45}$ & $4$ & \texttt{77797a9aa9799897} \\
    $10_{46}$ & $4$ & \texttt{787a88a99898a989} \\
    $10_{47}$ & $4$ & \texttt{787979a99977a997} \\
    $10_{48}$ & $4$ & \texttt{787998a79a98a997} \\
    $10_{49}$ & $4$ & \texttt{78a97a9a7988a989} \\
    $10_{50}$ & $4$ & \texttt{78a9897a9898a989} \\
    $10_{51}$ & $4$ & \texttt{78a9979a8a77a997} \\
    $10_{52}$ & $\geq4$ & n/a\\
    $10_{53}$ & $4$ & \texttt{7779899a89a79a97} \\
    $10_{54}$ & $4$ & \texttt{78a9977879799a97} \\
    $10_{55}$ & $4$ & \texttt{77798a9a89979a97} \\
    $10_{56}$ & $4$ & \texttt{787979a97a98a979} \\
    $10_{57}$ & $4$ & \texttt{78a97a97978aa979} \\
    $10_{58}$ & $4$ & \texttt{7778a979979779a9} \\
    $10_{59}$ & $4$ & \texttt{7778a899989aa8a9} \\
    $10_{60}$ & $4$ & \texttt{7778a8a9989a9979} \\
    $10_{61}$ & $4$ & \texttt{7778a989989889a9} \\
    $10_{62}$ & $4$ & \texttt{7778a9797aa9979a} \\
    $10_{63}$ & $4$ & \texttt{127979a9979a4397} \\
    $10_{64}$ & $4$ & \texttt{787799a79a98a997} \\
    $10_{65}$ & $4$ & \texttt{7778a9799a7797a9} \\
    $10_{66}$ & $4$ & \texttt{7a8aa9899888a989} \\
    $10_{67}$ & $4$ & \texttt{7778a979979797a9} \\
    $10_{68}$ & $4$ & \texttt{7a97a987899a98a7} \\
    $10_{69}$ & $4$ & \texttt{7778a89979a97a9a} \\
    $10_{70}$ & $4$ & \texttt{787998a7979a79a9} \\
    $10_{71}$ & $4$ & \texttt{7778a89999a77a9a} \\
    $10_{72}$ & $4$ & \texttt{78a98a989789a998} \\
    $10_{73}$ & $4$ & \texttt{78a97a899798a979} \\
    $10_{74}$ & $4$ & \texttt{787999a7979a79a8} \\
    $10_{75}$ & $4$ & \texttt{887a89a89a88a989} \\
    $10_{76}$ & $\geq4$ & n/a\\
    $10_{77}$ & $4$ & \texttt{7778a97999a7979a} \\
    $10_{78}$ & $4$ & \texttt{7a979789979889a9} \\
    $10_{79}$ & $4$ & \texttt{787979a99a88a997} \\
    $10_{80}$ & $4$ & \texttt{7779899889a9989a} \\
    $10_{81}$ & $\geq4$ & n/a\\
    $10_{82}$ & $4$ & \texttt{7a9797a88a89a998} \\
    $10_{83}$ & $4$ & \texttt{78a9979a8a79a977} \\
    $10_{84}$ & $4$ & \texttt{7a9779897798a979} \\
    $10_{85}$ & $4$ & \texttt{1279989779a9439a} \\
    $10_{86}$ & $\geq4$ & n/a\\
    $10_{87}$ & $4$ & \texttt{78a99778a9799a77} \\
    $10_{88}$ & $\geq4$ & n/a\\
    $10_{89}$ & $4$ & \texttt{7a9779a9a798897a} \\
    $10_{90}$ & $\geq4$ & n/a\\
    $10_{91}$ & $4$ & \texttt{7a977789999789a9} \\
    $10_{92}$ & $4$ & \texttt{887989a89a899998} \\
    $10_{93}$ & $4$ & \texttt{7a9799a889999877} \\
    $10_{94}$ & $4$ & \texttt{7a987989988a89a8} \\
    $10_{95}$ & $4$ & \texttt{7778a9797a9779a9} \\
    $10_{96}$ & $\geq4$ & n/a\\
    $10_{97}$ & $\geq4$ & n/a\\
    $10_{98}$ & $\geq4$ & n/a\\
    $10_{99}$ & $4$ & \texttt{7778a9798a9a79a9} \\
    $10_{100}$ & $4$ & \texttt{12779a9779a9439a} \\
    $10_{101}$ & $4$ & \texttt{78a9979aa7797a77} \\
    $10_{102}$ & $4$ & \texttt{78a9979a97797997} \\
    $10_{103}$ & $4$ & \texttt{787979a9a7987a9a} \\
    $10_{104}$ & $4$ & \texttt{787799a78a9aa997} \\
    $10_{105}$ & $\geq4$ & n/a\\
    $10_{106}$ & $4$ & \texttt{7778a979989a97a9} \\
    $10_{107}$ & $4$ & \texttt{887989a89a89998a} \\
    $10_{108}$ & $4$ & \texttt{7a987889989a8989} \\
    $10_{109}$ & $4$ & \texttt{7778a9798a9a97a9} \\
    $10_{110}$ & $\geq4$ & n/a\\
    $10_{111}$ & $4$ & \texttt{7a98989a88998989} \\
    $10_{112}$ & $4$ & \texttt{7879989779a99a98} \\
    $10_{113}$ & $4$ & \texttt{7a97798a97a8a979} \\
    $10_{114}$ & $\geq4$ & n/a\\
    $10_{115}$ & $\geq4$ & n/a\\
    $10_{116}$ & $4$ & \texttt{77798a9a79a99798} \\
    $10_{117}$ & $4$ & \texttt{887999a8a988989a} \\
    $10_{118}$ & $4$ & \texttt{78a99a98a9777a97} \\
    $10_{119}$ & $\geq4$ & n/a\\
    $10_{120}$ & $\geq4$ & n/a\\
    $10_{121}$ & $\geq4$ & n/a\\
    $10_{122}$ & $4$ & \texttt{1289989a89a94398} \\
    $10_{123}$ & $4$ & \texttt{78a99a98a9797a77} \\
    $10_{124}$ & $~~~3~~~$ & \texttt{888899998} \\
    $10_{125}$ & $4$ & \texttt{127789a998a84399} \\
    $10_{126}$ & $4$ & \texttt{12799897799a43a9} \\
    $10_{127}$ & $4$ & \texttt{127799a79a984399} \\
    $10_{128}$ & $4$ & \texttt{12778a9999894399} \\
    $10_{129}$ & $\geq4$ & n/a\\
    $10_{130}$ & $4$ & \texttt{127799a89a7943a9} \\
    $10_{131}$ & $4$ & \texttt{12779a9799a84379} \\
    $10_{132}$ & $3$ & \texttt{139913391} \\
    $10_{133}$ & $4$ & \texttt{0371129997a93994} \\
    $10_{134}$ & $4$ & \texttt{12779a9897a94399} \\
    $10_{135}$ & $4$ & \texttt{12779a9797a94399} \\
    $10_{136}$ & $4$ & \texttt{127779a8799a4399} \\
    $10_{137}$ & $4$ & \texttt{127779a8979a4399} \\
    $10_{138}$ & $4$ & \texttt{1287899a9a7943a9} \\
    $10_{139}$ & $3$ & \texttt{888899899} \\
    $10_{140}$ & $4$ & \texttt{0371129999793994} \\
    $10_{141}$ & $4$ & \texttt{127799a7799a4399} \\
    $10_{142}$ & $4$ & \texttt{12778a9997a94399} \\
    $10_{143}$ & $4$ & \texttt{127779a99a9743a9} \\
    $10_{144}$ & $4$ & \texttt{128789a9989a43a9} \\
    $10_{145}$ & $4$ & \texttt{02842991899a43a9} \\
    $10_{146}$ & $4$ & \texttt{12777a9aa7a9439a} \\
    $10_{147}$ & $4$ & \texttt{127798a98a9943a9} \\
    $10_{148}$ & $4$ & \texttt{127779a99a97439a} \\
    $10_{149}$ & $4$ & \texttt{127779a99a984399} \\
    $10_{150}$ & $4$ & \texttt{127789a98a9943a9} \\
    $10_{151}$ & $4$ & \texttt{12777a99a989439a} \\
    $10_{152}$ & $4$ & \texttt{03711299a9893994} \\
    $10_{153}$ & $4$ & \texttt{127789aa89794399} \\
    $10_{154}$ & $4$ & \texttt{0371129999893994} \\
    $10_{155}$ & $4$ & \texttt{127799a77a9a43a9} \\
    $10_{156}$ & $4$ & \texttt{1277789779a9439a} \\
    $10_{157}$ & $4$ & \texttt{7778a9787a9a7979} \\
    $10_{158}$ & $4$ & \texttt{12777a9a89a94399} \\
    $10_{159}$ & $4$ & \texttt{1288799aa8a9439a} \\
    $10_{160}$ & $4$ & \texttt{127a89a98a9943a8} \\
    $10_{161}$ & $4$ & \texttt{03711299799939a4} \\
    $10_{162}$ & $4$ & \texttt{12777a9a99794399} \\
    $10_{163}$ & $4$ & \texttt{1289989a9a7943a8} \\
    $10_{164}$ & $4$ & \texttt{1288989a89a84399} \\
    $10_{165}$ & $4$ & \texttt{1287889a89a94399} \\
\end{supertabular}

\end{document}